\newtheorem {thm}{Theorem}
\newtheorem* {thm*}{Theorem}
\newtheorem* {cor*}{Corollary}
\newtheorem {lem}[thm]{Lemma}
\newtheorem {prop}[thm]{Proposition}
\newtheorem {rem}[thm]{Remark}
\theoremstyle{definition}
\newtheorem {defi}[thm]{Definition}
\newtheorem {exa}[thm]{Example}
\newtheorem* {conj*}{Conjecture}
\newtheorem* {quest*}{Question}
\DeclareMathOperator{\End}{End}
\DeclareMathOperator{\Gal}{Gal}
\DeclareMathOperator{\Hom}{Hom}
\DeclareMathOperator{\Frob}{Fr}
\DeclareMathOperator{\Aut}{Aut}
\DeclareMathOperator{\Res}{Res}
\DeclareMathOperator{\ad}{ad}
\DeclareMathOperator{\Tr}{Tr}
\DeclareMathOperator{\ST}{ST}
\DeclareMathOperator{\USp}{USp}
\DeclareMathOperator{\Sym}{Sym}
\DeclareMathOperator{\M}{M}
\newcommand{\Q}{\mathbb{Q}}
\newcommand{\Z}{\mathbb{Z}}
\newcommand{\R}{\mathbb{R}}
\newcommand{\Qbar}{{\overline \Q}}
\newcommand{\Ebar}{\overline E}
\newcommand{\Xc}{\mathcal X}
\newcommand{\p}{\mathfrak{p}}
\newcommand{\triv}{\mathbf{1}}
\DeclareMathOperator{\GL}{GL}
\DeclareMathOperator{\Ind}{Ind}
\renewcommand{\geq}{\geqslant}
\renewcommand{\leq}{\leqslant}
\newcommand{\SU}{\mathrm{SU}}
\newcommand{\Ab}{\textbf{A}}
\newcommand{\Bb}{\textbf{B}}
\newcommand{\Cb}{\textbf{C}}
\newcommand{\Db}{\textbf{D}}
\newcommand{\Eb}{\textbf{E}}
\newcommand{\Fb}{\textbf{F}}
\DeclareMathOperator{\C}{\mathbb C}
\newcommand*{\stgroup}[2][]{\href{https://www.lmfdb.org/SatoTateGroup/#2}{{\ifx&#1& #2 \else #1 \fi}}}
\newcommand{\arXiv}[2]{\href{https://arxiv.org/abs/#1}{arXiv:#1v#2}}
\title[A local-global principle for polyquadratic twists of abelian surfaces]{A local-global principle for polyquadratic twists\\ of abelian surfaces}
\author{Francesc Fit\'e}
\address{Departament de matem\`atiques i inform\`atica and Centre de recerca matem\`atica,
Universitat de Barcelona,
Gran via de les Corts Catalanes 585, 08007 Barcelona, Catalonia.}
\email{ffite@ub.edu}
\urladdr{http://www.ub.edu/nt/ffite/}
\author{Antonella Perucca}
\address{Department of Mathematics, University of Luxembourg: 6, avenue de la Fonte, L-4364, Esch-sur-Alzette, Luxembourg.}
\email{antonella.perucca@uni.lu}
\urladdr{https://www.antonellaperucca.net/}
\begin{document}

\begin{abstract}
We say that two abelian varieties $A$ and $A'$ defined over a field $F$ are polyquadratic twists if they are isogenous over a Galois extension of $F$ whose Galois group has exponent dividing $2$. Let $A$ and $A'$ be abelian varieties defined over a number field $K$ of dimension $g\geq 1$. In this article we prove that, if $g\leq 2$, then $A$ and $A'$ are polyquadratic twists if and only if for almost all primes $\p$ of $K$ their reductions modulo $\p$ are polyquadratic twists. We exhibit a counterexample to this local-global principle for $g=3$. This work builds on a geometric analogue by Khare and Larsen, and on a similar criterion for quadratic twists established by Fité, relying itself on the works by Rajan and Ramakrishnan.
\end{abstract}

\maketitle

\section{Introduction}

Let $K$ denote a number field and let $A$ and $A'$ be abelian varieties defined over~$K$ of dimension $g\geq 1$. Call $\Sigma_K$ the set of nonzero prime ideals of the ring of integers $\mathcal O_K$ of $K$. Only finitely many primes in $\Sigma_K$ are of bad reduction for $A$ and $A'$. In this article, by ``almost all $\p\in \Sigma_K$" we will mean all primes of $\Sigma_K$ outside a zero density subset containing the finite set of primes of bad reduction.

Faltings' isogeny theorem \cite{Fal83} asserts that $A$ and $A'$ are isogenous if and only if their reductions $A_\mathfrak p$ and $A'_\mathfrak p$ modulo $\p$ are isogenous for almost all $\mathfrak p\in \Sigma_K$.
We say that two abelian varieties are \emph{geometrically isogenous} if their base changes to an algebraic closure of their field of definition are isogenous. Building on Faltings' isogeny theorem and a result by Pink \cite{Pin98}, Khare and Larsen \cite[Thm.\ 1]{KL20} have shown that $A$ and $A'$ are geometrically isogenous if and only if their reductions $A_\mathfrak p$ and $A'_\mathfrak p$ modulo $\p$ are geometrically isogenous for almost all $\mathfrak p\in \Sigma_K$ (in fact, they show that it suffices to require this property for a set of primes of sufficiently large density). 

In the context of the result of Khare and Larsen, Faltings' isogeny theorem  asserts that the property of $A$ and $A'$ admitting an isogeny defined over $K$ can be read purely from local data. 
There is a variety of results in the literature studying whether certain global properties of isogenies can be inferred from local information. For example, when $g=1$ and $m$ is a positive integer, the articles \cite{Sut12}, \cite{Ann14}, \cite{Vog20} study the problem of characterizing the existence of a degree $m$ isogeny of $A$ in terms of the existence of degree $m$ isogenies for $A_\p$ for almost all $\p \in \Sigma_K$. The analogous problem when $g=2$ has been recently explored by \cite{Ban21} and \cite{LV22}. Rather than on the degree of the isogeny, in the present article we focus on its field of definition.

We say that $A$ and $A'$ are \emph{quadratic twists} if $A'$ is isogenous to $A_\chi$ over $K$, where $A_\chi$ denotes the twist of $A$ by a quadratic character $\chi$ of $G_K$, regarded as an element in $H^1(G_K, \{\pm 1\})\subseteq H^1(G_K, \Aut(A_\Qbar))$. We say that $A$ and $A'$ are \emph{locally quadratic twists} if~$A_\mathfrak p$ and~$A'_\mathfrak p$ are quadratic twists for almost all $\mathfrak p\in \Sigma_K$. In the spirit of the previously mentioned results, the main theorem of \cite{Fit22} shows that, for $g\leq 3$, $A$ and $A'$ are quadratic twists if and only if they are locally quadratic twists. Moreover, a counterexample to this local-global principle in dimension $g=4$ is presented.

Notice that the condition of being quadratic twists is finer than that of being isogenous over a quadratic extension: if $A=E\times E_\chi$ and $A'=E^2$, where $E$ is an elliptic curve defined over~$K$ without CM and $\chi$ is a quadratic character, then $A$ and $A'$ are isogenous over a quadratic extension, but $A$ and $A'$ are not quadratic twists according to the definition in the previous paragraph. For similar reasons, the condition of being isogenous over a quadratic extension is not a notion that one can expect to satisfy a local-global principle beyond dimension $2$. Indeed, if $A=E_\chi\times E_\psi$ and $A'=E^2$, where $E$ is an elliptic curve defined over~$K$ without CM and $\chi$ and $\psi$ are distinct quadratic characters, then $A_\p$ and $A'_\p$ are isogenous over a quadratic extension for almost all $\p\in \Sigma_K$, but $A$ and $A'$ only become isogenous over a biquadratic extension.
  
The discussion in the previous paragraph suggests that only the following weakening of the notion of being isogenous over a quadratic extension can aspire to satisfy a local-global principle.
We say that a Galois field extension is polyquadratic if its Galois group has exponent dividing~$2$. Note that a polyquadratic extension of a finite field is either trivial or quadratic. We say that two abelian varieties are \emph{polyquadratic twists} if their base changes to a polyquadratic extension
are isogenous. We say that $A$ and $A'$ are \emph{locally polyquadratic twists} if $A_\p$ and~$A'_\p$ are polyquadratic twists for almost all $\p\in \Sigma_K$.

It is easy to see (cf. Remark \ref{explanation}) that if $A$ and $A'$ are polyquadratic twists, then $A$ and $A'$ are locally polyquadratic twists.
The goal of this article is to study the converse of this implication. For $g=1$, this converse follows easily from results of Rajan \cite[Thm. B]{Raj98} and Ramakrishnan \cite[Thm. 2]{Ram00} (see Lemma \ref{lemma: dimension1}). 
The main result of this article is the following.

\begin{thm}\label{theorem: main}
Suppose that $A$ and $A'$ are abelian surfaces defined over a number field $K$. Then they are polyquadratic twists if and only if they are locally polyquadratic twists.
\end{thm}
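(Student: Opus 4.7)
The forward implication is addressed in Remark~\ref{explanation}. For the converse, my plan proceeds in three steps.

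\emph{Step 1 (geometric isogeny and cocycle set-up).} Since polyquadratic twists are in particular geometrically isogenous and this property descends to reductions modulo $\p$, Khare--Larsen implies that $A$ and $A'$ are geometrically isogenous. Replacing $A'$ by a variety in its $K$-isogeny class, I may realise $A'$ as a twisted form of $A$: fixing a geometric isogeny $\phi\colon A_{\overline K}\to A'_{\overline K}$, the map $\sigma\mapsto\phi^{-1}\sigma(\phi)$ is a continuous cocycle $c\colon G_K\to D^\times$ with $D=\End^0(A_{\overline K})$. This cocycle factors through $\Gal(L_0/K)$ for a finite Galois extension $L_0/K$ over which all geometric endomorphisms of $A$ are defined. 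The theorem is then equivalent to the assertion that $[c]\in H^1(G_K,D^\times)$ is inflated from $H^1(\Gal(L/K),D^\times)$ for some polyquadratic $L/K$.

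\emph{Step 2 (Chebotarev input).} By Tate's theorem for abelian varieties over finite fields, the local polyquadratic hypothesis says that for almost all $\p$,
\[
\det(T-\rho_\ell(\Frob_\p)^2)=\det(T-\rho'_\ell(\Frob_\p)^2),
\]
where $\rho_\ell,\rho'_\ell$ are the $\ell$-adic Galois representations attached to $A$ and $A'$. Chebotarev density then propagates this to every $\sigma\in G_K$: the eigenvalues of $\rho_\ell(\sigma)$ and $\rho'_\ell(\sigma)$ coincide up to a sign pattern.

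\emph{Step 3 (case analysis for $g=2$).} I would split according to the isogeny type of $A_{\overline K}$. When $D^\times$ is commutative---namely, the geometrically simple cases with $D$ a field (trivial endomorphisms, real multiplication, or quartic CM) and the product case $A_{\overline K}\sim E_1\times E_2$ with $E_1\not\sim E_2$---the Chebotarev constraint together with the cocycle relation forces $c$ modulo coboundaries to have image of exponent dividing~$2$, so that $c$ factors through a polyquadratic quotient; in the product subcase, one can alternatively decompose $A'$ accordingly and apply the $g=1$ criterion of Rajan--Ramakrishnan factor by factor, then combine the two resulting quadratic characters into a polyquadratic twist. When $D^\times$ is non-commutative---the simple case with an indefinite quaternion algebra, or $A_{\overline K}\sim E^2$ with $D\cong\M_2(F)$ for $F=\mathbb{Q}$ or an imaginary quadratic field---the argument is more delicate: the pointwise ``eigenvalues up to signs'' condition is strictly weaker than the local quadratic twist hypothesis of~\cite{Fit22}, so the latter cannot be invoked directly, and one must combine F\'it\'e's quadratic twist criterion with the explicit $\GL_2(F)$-structure of the cocycle to extract a polyquadratic factorisation. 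This last case is the main obstacle, since the non-abelian matrix structure is a priori compatible with twists that are merely of exponent $2$ without being polyquadratic, and it is precisely here that the finer input from Rajan and Ramakrishnan, channelled through F\'it\'e's criterion, must do the heavy lifting.
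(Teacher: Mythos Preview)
Your plan shares the paper's scaffolding---forward implication via reduction, Khare--Larsen for geometric isogeny, then a case split by $\End^0(A_{\overline K})$---but Step~3 contains genuine gaps rather than a route to a proof.

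\textbf{The commutative cases are not actually dispatched.} Your assertion that ``the Chebotarev constraint together with the cocycle relation forces $c$ modulo coboundaries to have image of exponent dividing~$2$'' is not justified. Even when $D$ is commutative, $G_K$ acts nontrivially on $D^\times$ (e.g.\ in the quartic CM case with $\End^0(A)\simeq\Q$, or in the RM case with $\End^0(A)\simeq\Q$), so the cocycle is not a homomorphism and there is no obvious reason the sign constraint on Frobenius eigenvalues pins down its image. The paper does real work here: for absolute type~$\Bb$ (RM) it proves two auxiliary lemmas comparing Sato--Tate groups and $\ell$-adic constituents to rule out mismatched decompositions and force the twisting characters to be quadratic; for type~$\Db$ (quartic CM) it uses the induction description from the reflex field together with Lemma~\ref{lemma: handy} to show that at primes not split in $KM^*$ both traces vanish, upgrading locally polyquadratic to locally quadratic and then invoking Theorem~\ref{theorem: Fite}. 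Also, in the product subcase you cannot simply ``apply Rajan--Ramakrishnan factor by factor'': the local polyquadratic hypothesis on $A$ does not a priori separate into factorwise hypotheses on $(E_i,E_i')$, and one needs an argument (the paper uses Lemma~\ref{lemma: trivialends} or \cite[Lem.~4.14]{Fit22}) to isolate the factors first.

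\textbf{The non-commutative case is left open, and the key idea is missing.} You correctly identify types $\Eb$, $\Fb$ (i.e.\ $A_{\overline K}\sim E^2$ or quaternionic) as the crux, but offer no mechanism. The paper's decisive input is the tensor decomposition of \cite{FG22}: one writes $\varrho_{A,\ell}\simeq\theta\otimes\varrho$ with $\theta$ a degree-$2$ Artin representation and $\varrho$ a strongly absolutely irreducible degree-$2$ $\ell$-adic representation (and similarly for $A'$ with the same $\varrho$ up to a character). A density-one set where $\alpha_{1,\p}/\alpha_{2,\p}$ is not a root of unity lets one strip off $\varrho$ and reduce to showing that the Artin pieces $\theta$ and $\chi\otimes\theta'$ are locally polyquadratic twists. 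The paper then proves a new degree-$2$ result, Theorem~\ref{thm: locpolytwists} (locally polyquadratic $\Rightarrow$ polyquadratic for $r=2$), which is an elaboration of Ramakrishnan requiring a separate argument when $\det\varrho\ne\det\varrho'$. Without this tensor decomposition and the degree-$2$ criterion, there is no visible path through the $\M_2(F)$ cases; the cocycle picture alone does not supply one.
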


To complement the above theorem, in \S\ref{section: counterexample} we exhibit two abelian threefolds that are locally polyquadratic twists but are not polyquadratic twists. In \S\ref{section: preliminaries}, we translate our problem into a problem about $\ell$-adic Galois representations and reinterpret the notions defined in this introduction in group theoretic terms. We also recall the main results of \cite{KL20} and \cite{Fit22}, and we prove some technical results that will be used in later sections. In \S\ref{section: squares} we carry out the proof of Theorem \ref{theorem: main} in the most complicated case, that is, when $A$ is either geometrically isogenous to the square of an elliptic curve or has geometric quaternionic multiplication. A crucial input in this case is the tensor decomposition of the Tate module $T_\ell(A)$ provided by \cite{FG22}. This remarkably allows us to translate our problem concerning $\ell$-adic representations of degree~$4$ into a problem concerning Artin representations of degree $2$ 
(see Theorem~\ref{thm: locpolytwists} for a solution of the latter problem). The proof of the remaining cases of Theorem \ref{theorem: main} takes place in \S\ref{section: proof}, where we benefit from the classification of Sato-Tate groups of abelian surfaces of \cite{FKRS12}. We remark however that our proof is independent of the Sato-Tate conjecture. 

\subsection*{Notation and terminology.} All algebraic extensions of $K$ are contained in some fixed algebraic closure $\Qbar$ of $\Q$, and we denote by $G_K$ the absolute Galois group $\Gal(\Qbar/K)$. For a field extension $L/K$, we write $A_L$ to denote the base change of $A$ from $K$ to $L$. Homomorphisms between abelian varieties are always implicitly understood to be defined over the field of definition. Given a representation $\varrho$, we call $\varrho^\vee$ its contragredient representation, $\ad(\varrho)\simeq \varrho \otimes \varrho^\vee$ its adjoint representation, and $\ad^0(\varrho)$ the subrepresentation of $\ad(\varrho)$ on the trace $0$ subspace. If $\varrho$ is a representation of $G_K$ with coefficients in a field $E$ and $L/K$ is an algebraic extension, we denote by $\varrho|_L$ the restriction of $\varrho$ to $G_L$; we say that $\varrho$ is absolutely irreducible if $\varrho\otimes \overline E$ is irreducible; and we say that $\varrho$ is strongly absolutely irreducible if $\varrho|_L$ is absolutely irreducible for every finite extension~$L/K$. 

\subsection*{Acknowledgements.} We thank Shiva Chidambaram for Example \ref{shiva}, and the anonymous referee for comments that led to improvements in the exposition of the results. Fit\'e thanks the organizers of the PCMI Research Program ``Number Theory Informed by Computation", which facilitated fruitful discussions with Shiva Chidambaram and David Roe. Fit\'e expresses his gratitude to the University of Luxembourg for its warm hospitality during the last week of May 2022, where discussions with Perucca that eventually led to the present work started.
Fit\'e was financially supported by the Ram\'on y Cajal fellowship RYC-2019-027378-I, the Simons Foundation grant 550033, and the Mar\'ia de Maeztu Program of excellence CEX2020-001084-M. 

\section{Preliminaries}\label{section: preliminaries}

\subsection{Group theoretic descriptions}\label{section: grouptheoretic}

Let $E$ be a topological field of characteristic 0, and fix an algebraic closure~$\Ebar$. Let $G$ be a compact topological group, and let $\varrho,\varrho': G\rightarrow \GL_r(E)$ be semisimple continuous group representations, where $r$ is a positive integer. 

\begin{defi}
We call $\varrho$ and $\varrho'$ \emph{locally quadratic twists} if for every $s\in G$ there exists $\epsilon_s\in \{\pm 1\}$ such that
$$
\det(1-\varrho(s)T)= \det(1-\epsilon_s\varrho'(s)T)\,.
$$
We call them \emph{locally polyquadratic twists} if for every $s\in G$ we have
$$
\det(1-\varrho(s^2)T)= \det(1-\varrho'(s^2)T)\,.
$$
\end{defi}

One can characterize the notions of being locally polyquadratic and locally quadratic twists in terms of relations satisfied by the alternating and symmetric powers of the given representations. When $r=4$, these relations acquire a particularly clean form.

\begin{lem}\label{lemma: gpcharac}
The representations $\varrho, \varrho'$ are locally polyquadratic twists if and only if
\begin{equation}\label{virtual}
\Sym^2 \varrho - \wedge^ 2\varrho \simeq \Sym^2 \varrho' - \wedge^ 2\varrho'
\end{equation}
as virtual representations.

If $r=4$, they  are locally quadratic twists if and only if
$$
\Sym^ 2 \varrho \simeq \Sym^ 2 \varrho'\qquad \text{and}\qquad \wedge^ 2 \varrho \simeq \wedge^ 2 \varrho'\,.
$$
\end{lem}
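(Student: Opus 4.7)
For the first statement, my plan starts from the trace computation
\[
\mathrm{tr}(\Sym^2 \varrho(s)) - \mathrm{tr}(\wedge^2 \varrho(s)) = \sum_i \alpha_i(s)^2 = \mathrm{tr}(\varrho(s^2)),
\]
which rewrites the virtual identity \eqref{virtual} as the pointwise equality $\mathrm{tr}(\varrho(s^2)) = \mathrm{tr}(\varrho'(s^2))$ for every $s \in G$. One direction is then immediate: locally polyquadratic twists means $\det(1 - \varrho(s^2)T) = \det(1 - \varrho'(s^2)T)$, which implies the trace identity. For the converse, applying the trace identity at $s^k$ in place of $s$ yields $\mathrm{tr}(\varrho(s^{2k})) = \mathrm{tr}(\varrho'(s^{2k}))$ for every $k \geq 1$; these are the power sums of the eigenvalues of $\varrho(s^2)$, so Newton's identities reconstruct $\det(1 - \varrho(s^2)T)$.

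For the second statement, the forward direction is direct: the hypothesis yields $\alpha_i(s) = \epsilon_s \beta_{\sigma(i)}(s)$ for some sign $\epsilon_s \in \{\pm 1\}$ and permutation $\sigma$, and the factor $\epsilon_s^2 = 1$ cancels in every pairwise product of eigenvalues, so both the $\Sym^2$ and $\wedge^2$ characters coincide.

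The reverse direction is the main content. Fix $s \in G$ and set $c_k = \mathrm{tr}(\wedge^k\varrho(s)) = e_k(\alpha_s)$, and similarly $c_k'$ for $\varrho'$. From $\wedge^2\varrho\simeq\wedge^2\varrho'$ I obtain $c_2 = c_2'$. Using $\mathrm{tr}(\Sym^2\varrho(s)) + \mathrm{tr}(\wedge^2\varrho(s)) = \mathrm{tr}(\varrho(s))^2$, the hypothesis gives $c_1^2 = (c_1')^2$. Since $\det(\wedge^2\varrho) = (\det\varrho)^3$ and $\det(\Sym^2\varrho) = (\det\varrho)^5$ as one-dimensional characters of $G$, the hypothesis also yields $c_4^3 = (c_4')^3$ and $c_4^5 = (c_4')^5$, so $c_4 = c_4'$ using $\gcd(3,5) = 1$. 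The heart of the proof is to produce a common sign for $c_1$ and $c_3$. For this I plan to combine Newton's identity $4 e_4 = e_1 e_3 - e_2 p_2 + e_1 p_3 - p_4$ with the expansion $p_3 = e_1^3 - 3 e_1 e_2 + 3 e_3$ to derive
\[
4 e_1 e_3 = 4 e_4 + e_2 p_2 + p_4 - e_1^4 + 3 e_1^2 e_2.
\]
Every term on the right depends only on $c_4$, $c_2$, $c_1^2$, $p_2 = c_1^2 - 2c_2$ and $p_4 = p_2^2 - 2\mathrm{tr}(\wedge^2\varrho(s^2))$, hence coincides for $\varrho$ and $\varrho'$; this gives $c_1 c_3 = c_1' c_3'$. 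Combined with $c_3^2 = (c_3')^2$, which follows from the identity $e_3 = e_4 \cdot \mathrm{tr}(\varrho(s^{-1}))$ together with the already-established equality $\mathrm{tr}(\varrho(s^{-1}))^2 = \mathrm{tr}(\varrho'(s^{-1}))^2$, a short case distinction on whether $c_1(s)$ or $c_3(s)$ vanishes produces the common sign $\epsilon_s \in \{\pm 1\}$ with $(c_1, c_3) = \epsilon_s (c_1', c_3')$, which is the locally quadratic twist condition.

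The main obstacle I expect is the Newton's identity manipulation yielding $c_1 c_3 = c_1' c_3'$: the key point is that every quantity on the right-hand side of the derived identity must be accessible from the isomorphisms $\wedge^2\varrho \simeq \wedge^2\varrho'$ and $\Sym^2\varrho \simeq \Sym^2\varrho'$ evaluated at $s$ and $s^2$. This is precisely where the specific dimension $r = 4$ enters, in contrast to the failure in odd dimensions witnessed by Example \ref{example: counterexample deg3}.
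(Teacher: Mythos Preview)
Your argument is correct. For the first assertion you follow the paper's approach (the identity $\Tr\Sym^2\varrho(s)-\Tr\wedge^2\varrho(s)=\Tr\varrho(s^2)$), and you are in fact more careful than the paper about the converse direction, spelling out the Newton's-identity step via $s\mapsto s^k$ that the paper leaves implicit.

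For the second assertion your route genuinely differs from the paper's. The paper rephrases ``locally quadratic twists'' as the four isomorphisms $\varrho\otimes\varrho\simeq\varrho'\otimes\varrho'$, $\wedge^2\varrho\simeq\wedge^2\varrho'$, $\varrho\otimes\wedge^3\varrho\simeq\varrho'\otimes\wedge^3\varrho'$, $\det\varrho\simeq\det\varrho'$, and then argues representation-theoretically that the last two follow from the first two: it obtains $\det\varrho=\det\varrho'$ from $\gcd(8,3)=1$ (where you use $\gcd(5,3)=1$, which is equivalent), and then passes through $\varrho\otimes\varrho^\vee\simeq\varrho'\otimes\varrho'^\vee$ and $\wedge^3\varrho\simeq\det\varrho\otimes\varrho^\vee$ to get the third isomorphism. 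As written, however, the paper justifies $\varrho\otimes\varrho^\vee\simeq\varrho'\otimes\varrho'^\vee$ by saying ``as $\varrho$ and $\varrho'$ are locally quadratic twists'', which is the very conclusion being sought. Your explicit symmetric-function computation is precisely what closes this gap: your Newton identity simplifies to
\[
c_2(s^2)=c_2(s)^2-2\,c_1(s)c_3(s)+2\,c_4(s),
\]
so that $c_1c_3$ is determined by the characters of $\wedge^2\varrho$ (at $s$ and $s^2$) and of $\det\varrho$ alone. This yields $c_1c_3=c_1'c_3'$, hence $\varrho\otimes\varrho^\vee\simeq\varrho'\otimes\varrho'^\vee$, without circularity. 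In short, the paper's proof is conceptually cleaner but elides the key step; your computational proof is self-contained and supplies exactly that step.
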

\begin{proof}
For the first assertion, it suffices to verify that both sides of \eqref{virtual} have the same virtual character.
Indeed, if for $s \in G$ we denote by $\alpha_i$ the eigenvalues of $\varrho(s)$, then we have
$$\Tr\Sym^2\varrho(s) - \Tr\wedge^2 \varrho(s)=\sum_i \alpha_i^2=\Tr\Sym^2\varrho'(s) - \Tr\wedge^2 \varrho'(s)\,.$$
Now consider the second assertion.
As $r=4$, we know that $\varrho$ and $\varrho'$ are locally quadratic twists if and only if we have 
\begin{equation}\label{equation: isomorphisms}
\varrho \otimes \varrho \simeq \varrho' \otimes \varrho',\quad \wedge^2 \varrho \simeq \wedge^ 2 \varrho'\,,\quad \varrho \otimes \wedge^ 3 \varrho \simeq \varrho' \otimes \wedge^ 3 \varrho'\,,\quad \det(\varrho) \simeq \det(\varrho')\,.
\end{equation}
Then it suffices to show that the third and fourth of the above isomorphisms follow from the first two. Taking determinants, the first (respectively, second) isomorphism implies $\det(\varrho)^8\simeq \det(\varrho')^ 8$ (respectively, $\det(\varrho)^3\simeq \det(\varrho')^3$) and we deduce $\det(\varrho) \simeq \det(\varrho')$. As $\varrho$ and $\varrho'$ are locally quadratic twists, we know $\varrho \otimes \varrho^ \vee\simeq \varrho' \otimes \varrho'^ \vee$. Considering that $\wedge^3 \varrho \simeq \det(\varrho)\otimes \varrho^\vee$ (and similarly for $\varrho'$), we obtain $\varrho \otimes \wedge^ 3 \varrho \simeq \varrho' \otimes \wedge^ 3 \varrho'$.\end{proof}

\begin{defi}\label{def-polyquadratic-twists}
We call $\varrho$ and $\varrho'$ \emph{quadratic twists} if $\varrho'\simeq \chi \otimes \varrho$ holds for some quadratic character~$\chi$ of $G$. We call them \emph{polyquadratic twists} if 
\begin{equation}\label{equation: polquadtwist}
\varrho \simeq \bigoplus_{i=1}^t\varrho_i\qquad\text{and}\qquad  \varrho' \simeq \bigoplus_{i=1}^t\varrho_i'
\end{equation}
holds for representations $\varrho_i,\varrho_i': G\rightarrow \GL_{n_i}(E)$ such that $\varrho_i'\simeq \chi_i\otimes \varrho_i$, where $n_i$ is a positive integer and $\chi_i$ is a quadratic character of $G$.
\end{defi}

\begin{rem}\label{used}
Suppose that $r=2$, and let $\varrho$ and $\varrho'$ be polyquadratic twists. Then either $\varrho$ and~$\varrho'$ are quadratic twists or there exist characters $\varphi_1,\varphi_2$ of $G$ and quadratic characters $\chi_1,\chi_2$ of~$G$ such that $\varrho\simeq \varphi_1 \oplus \varphi_2$ and $\varrho' \simeq \chi_1\varphi_1 \oplus \chi_1\varphi_2 $.
\end{rem}

We have the following characterization of polyquadratic twists.

\begin{prop}\label{proposition: polyquadratictwists}
The representations $\varrho,\varrho'$ are polyquadratic twists if and only if $\varrho|_H\simeq \varrho'|_H$ holds for some normal subgroup $H\subseteq G$ such that $G/H$ is a finite abelian group of exponent dividing~$2$.
\end{prop}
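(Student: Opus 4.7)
The plan is to treat the two implications separately. For the direction $(\Rightarrow)$, given decompositions $\varrho \simeq \bigoplus_i \varrho_i$ and $\varrho' \simeq \bigoplus_i \chi_i \otimes \varrho_i$ as in Definition~\ref{def-polyquadratic-twists}, I would set $H = \bigcap_i \ker(\chi_i)$. Then $G/H$ embeds into $\{\pm 1\}^t$, hence is a finite abelian group of exponent dividing~$2$, and each $\chi_i$ is trivial on $H$, so $\varrho'|_H \simeq \bigoplus_i \varrho_i|_H \simeq \varrho|_H$.

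For the direction $(\Leftarrow)$, the main tool is the projection formula together with the decomposition of the group algebra of $G/H$. For any semisimple representation $\rho$ of $G$ one has
\[
\Ind_H^G(\rho|_H) \;\simeq\; \rho \otimes_E \Ind_H^G(\mathbf{1}_H) \;\simeq\; \bigoplus_{\chi \in \widehat{G/H}} \chi \otimes \rho,
\]
where the second isomorphism uses that $G/H$ is a finite abelian group of exponent dividing~$2$ and that $\pm 1 \in E$ (the characteristic of $E$ being different from~$2$ is implicit in the paper's setup, since $\epsilon_s \in \{\pm 1\}$ is already used in the definition of locally quadratic twists). Applying this to $\varrho$ and $\varrho'$ and using $\varrho|_H \simeq \varrho'|_H$ yields
\[
\bigoplus_{\chi \in \widehat{G/H}} \chi \otimes \varrho \;\simeq\; \bigoplus_{\chi \in \widehat{G/H}} \chi \otimes \varrho'.
\]

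I would then partition the set of isomorphism classes of irreducible summands of $\varrho$ and $\varrho'$ into orbits under the twisting action of $\widehat{G/H}$. For any such orbit $\mathcal{O}$, every $\pi \in \mathcal{O}$ has the same stabiliser $S \subseteq \widehat{G/H}$ (because $\widehat{G/H}$ is abelian), and a direct count shows that $\pi$ appears in $\bigoplus_\chi \chi \otimes \varrho$ with multiplicity $|S| \cdot m_\varrho(\mathcal{O})$, where $m_\varrho(\mathcal{O})$ denotes the total multiplicity in $\varrho$ of constituents lying in $\mathcal{O}$; similarly for $\varrho'$. The displayed isomorphism forces $m_\varrho(\mathcal{O}) = m_{\varrho'}(\mathcal{O})$ for every $\mathcal{O}$. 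I would then list the constituents of $\varrho$ belonging to $\mathcal{O}$ as $\tau_1, \dots, \tau_m$ (with multiplicities) and those of $\varrho'$ as $\tau_1', \dots, \tau_m'$, pick for each $k$ a quadratic character $\chi_k \in \widehat{G/H}$ satisfying $\tau_k' \simeq \chi_k \otimes \tau_k$, and gather these pairings over all orbits to assemble the decompositions $\varrho \simeq \bigoplus_k \tau_k$ and $\varrho' \simeq \bigoplus_k \chi_k \otimes \tau_k$ required by Definition~\ref{def-polyquadratic-twists}. The only delicate step is the orbit-multiplicity bookkeeping, where the stabiliser factor $|S|$ must not be forgotten.
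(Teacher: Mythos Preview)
Your proposal is correct. The $(\Rightarrow)$ direction is identical to the paper's. For $(\Leftarrow)$ the two arguments diverge in organisation, though they rest on the same underlying ingredient. The paper proceeds iteratively: it fixes a single irreducible constituent $\theta$ of $\varrho$, observes (citing \cite[Thm.~3.1]{Fit12}) that $\theta$ embeds in $\Hom_H(\theta,\varrho')\otimes\varrho'$, uses that $\Hom_H(\theta,\varrho')$ decomposes as a sum of quadratic characters of $G/H$ to extract a constituent $\theta'$ of $\varrho'$ with $\theta'\simeq\chi\otimes\theta$, then strips off $\theta$ and $\theta'$ and repeats on the complements. Your argument is global: you apply the projection formula $\Ind_H^G(\rho|_H)\simeq\bigoplus_{\chi}\chi\otimes\rho$ to $\varrho$ and $\varrho'$ simultaneously, obtain $\bigoplus_\chi\chi\otimes\varrho\simeq\bigoplus_\chi\chi\otimes\varrho'$, and then match constituents orbit-by-orbit via a multiplicity count. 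Your route is self-contained (no external citation needed) and makes the symmetry between $\varrho$ and $\varrho'$ explicit; the paper's route is a few lines shorter and avoids the stabiliser bookkeeping. Both are equally valid, and your caution about the stabiliser factor $|S|$ is exactly the point where a careless version of the orbit argument would go wrong.
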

\begin{proof}
Supposing that $\varrho,\varrho'$ are polyquadratic twists, we may take for $H$ the intersection of the kernels of $\chi_1,\ldots, \chi_r$. For the converse implication, let $\theta$ be an irreducible constituent of~$\varrho$. By the argument in the proof of \cite[Thm.\ 3.1]{Fit12}, the representation $\theta$ is an irreducible constituent of
$\Hom_{H}(\theta,\varrho')\otimes \varrho'$, where $\Hom_{H}(\theta,\varrho')$ denotes the space of $H$-equivariant homomorphisms from $\theta$ to $\varrho'$.
Since $\Hom_{H}(\theta,\varrho')$, as a representation of $G/H$, is a sum of quadratic characters of $G/H$, there exists an irreducible constituent $\theta'$ of $\varrho'$ and a quadratic character $\chi$ of $G/H$ such that $\theta'\simeq \chi \otimes \theta$. We can apply the same argument to the complement of $\theta$ in $\varrho$ and the complement of $\theta'$ in $\varrho'$, and the proposition follows by iterating the above step.
\end{proof}

\begin{rem}\label{Ebar}
By the above proposition, in Definition \ref{def-polyquadratic-twists} we may replace $E$ by $\Ebar$.
\end{rem}

\begin{rem}\label{rem: locpol implies pol}
If $\varrho$ and $\varrho'$ are quadratic (respectively, polyquadratic) twists, then clearly they are locally quadratic (respectively, polyquadratic) twists.
\end{rem}

\begin{rem}\label{remark: factors}
Let $\varrho_i$ and $\varrho_i'$, for $i\in \{ 1,2,3\}$, be semisimple representations of $G$ satisfying
$$
\varrho_1\simeq \varrho_2\oplus \varrho_3\,,\qquad  \varrho_1'\simeq \varrho_2'\oplus \varrho_3'\,.
$$ 
Suppose that $\varrho_i$ and $\varrho_i'$ are polyquadratic twists (resp.\ locally polyquadratic twists) for all $i\in I$, where $I$ is a subset of $\{1,2,3\}$ with $|I|=2$. It then follows immediately from the definitions that $\varrho_i$ and $\varrho_i'$ are polyquadratic twists (resp.\ locally polyquadratic twists) for all $i$. The previous
property is not true in general if we replace ``polyquadratic" by ``quadratic".
\end{rem}

\begin{thm}[Ramakrishnan]\label{thm: Ramakrishnan}
Suppose that $r=2$. If $\varrho$ and $\varrho'$ are locally quadratic twists, then they are quadratic twists.
\end{thm}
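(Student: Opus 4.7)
The plan is to translate the local characteristic-polynomial identity into an isomorphism of adjoint representations and then recover the twisting character.

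First, expanding $\det(1-\varrho(s)T)=\det(1-\epsilon_s\varrho'(s)T)$ in dimension $r=2$ gives, for every $s\in G$, both
\[
\det(\varrho(s))=\det(\varrho'(s))\qquad\text{and}\qquad \Tr(\varrho(s))^2=\Tr(\varrho'(s))^2.
\]
Using $\Tr(\Sym^2\theta)(s)=\Tr(\theta(s))^2-\det(\theta(s))$ for any $2$-dimensional $\theta$, these combine into an equality of class functions $\Tr(\Sym^2\varrho)=\Tr(\Sym^2\varrho')$; semisimplicity then upgrades this to $\Sym^2\varrho\simeq\Sym^2\varrho'$, and tensoring by the common character $\det(\varrho)^{-1}$ yields $\ad^0(\varrho)\simeq \ad^0(\varrho')$.

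Next I would split cases based on absolute irreducibility, working over $\Ebar$ by Remark~\ref{Ebar}. If $\varrho$ is reducible, write $\varrho\simeq\chi_1\oplus\chi_2$ and $\varrho'\simeq\chi_1'\oplus\chi_2'$; linear independence of characters applied to $\chi_1^2+\chi_2^2=\chi_1'^2+\chi_2'^2$ forces the multiset equality $\{\chi_1^2,\chi_2^2\}=\{\chi_1'^2,\chi_2'^2\}$, which combined with the determinant constraint $\chi_1\chi_2=\chi_1'\chi_2'$ yields, by a short case analysis, a single quadratic character $\psi$ with $\varrho'\simeq\psi\otimes\varrho$. If instead $\varrho$ is absolutely irreducible, the key step is to upgrade $\ad^0\varrho\simeq\ad^0\varrho'$ to a conjugacy of the projective representations $\overline\varrho,\overline\varrho':G\to\mathrm{PGL}_2(\Ebar)$; granting this, any conjugating element $g\in \GL_2(\Ebar)$ yields $\varrho'(s)=c_s\,g\varrho(s)g^{-1}$ with $c:G\to\Ebar^\times$ scalar, and multiplicativity of $\varrho,\varrho'$ forces $c$ to be a continuous character, while $\det(\varrho)=\det(\varrho')$ forces $c^2=1$; hence $c$ is quadratic and $\varrho'\simeq c\otimes\varrho$.

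The main obstacle is the projectivization step in the absolutely irreducible case: the $G$-equivariant isomorphism $T:\mathfrak{sl}_2(\Ebar)\to\mathfrak{sl}_2(\Ebar)$ provided by $\ad^0\varrho\simeq\ad^0\varrho'$ is only linear a priori and need not preserve the Lie bracket. To produce a genuine element of $\mathrm{PGL}_2(\Ebar)$, I would exploit that the images of $\ad^0\varrho$ and $\ad^0\varrho'$ both lie in $\mathrm{Ad}(\mathrm{PGL}_2)\simeq \mathrm{SO}(\mathfrak{sl}_2,\kappa)$ and use the fact that $\mathrm{Aut}(\mathfrak{sl}_2)$ is entirely inner in characteristic different from~$2$; correcting $T$ via the essentially unique $G$-invariant symmetric form on the semisimple module $\ad^0\varrho$ then produces a linear map preserving the Killing form, which must come from $\mathrm{PGL}_2$.
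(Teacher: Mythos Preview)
Your argument and the paper's agree up to the point $\ad^0(\varrho)\simeq\ad^0(\varrho')$; the paper then simply invokes \cite[Thm.~B]{Ram00} as a black box to produce the twisting character, whereas you attempt to reprove that implication from scratch via the exceptional isomorphism $\mathrm{PGL}_2\simeq\mathrm{SO}(\mathfrak{sl}_2,\kappa)$. This is a legitimate and more self-contained route, and it makes transparent why the result is specific to degree~$2$. The price is that you must handle by hand the passage from a linear intertwiner to an orthogonal one, which the citation absorbs.

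Two points in your write-up need tightening. First, in the reducible branch you silently assume that $\varrho'$ is also reducible; this follows because $\varrho$ is absolutely irreducible iff the trivial representation does not occur in $\ad^0(\varrho)$, and you have $\ad^0(\varrho)\simeq\ad^0(\varrho')$, but it should be said. Second, and more substantively, the phrase ``essentially unique $G$-invariant symmetric form on $\ad^0(\varrho)$'' is false when $\varrho$ is dihedral: then $\ad^0(\varrho)$ is reducible and the space of invariant symmetric forms has dimension $\geq 2$. The fix is standard but different from what you wrote: the two nondegenerate invariant symmetric forms $\kappa$ and $T^*\kappa$ on $\ad^0(\varrho)$ differ by a $G$-equivariant self-adjoint automorphism $A$, and over $\Ebar$ one replaces $T$ by $T\circ A^{-1/2}$ (a polynomial in $A$, hence still $G$-equivariant) to land in $\Or(\kappa)$. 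Finally, preserving $\kappa$ only gives $T\in\Or_3$, not $\mathrm{SO}_3\simeq\mathrm{PGL}_2$; since $\dim=3$ is odd, compose with $-I$ if necessary to correct the determinant without losing $G$-equivariance.
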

\begin{proof}
The hypothesis is equivalent to having $\wedge^2 \varrho \simeq \wedge^2 \varrho'$ and $\varrho\otimes \varrho \simeq \varrho' \otimes \varrho'$. Since $\varrho^ \vee \otimes \wedge^2 \varrho \simeq \varrho$ (and the same holds for $\varrho'$) we obtain $\ad^0(\varrho)\simeq \ad^0(\varrho')$.
By \cite[Thm. B]{Ram00} (the result is stated for $\ell$-adic representations, but the proof is valid in our context) there exists a character $\chi$ of $G$ such that $\varrho'\simeq \chi \otimes \varrho$. Morever, $\chi$ must be quadratic because $\wedge^2 \varrho \simeq \wedge^2 \varrho'$. 
\end{proof}

\begin{rem}
The above result also holds for $r$ odd by \cite[Cor.\ 4.3]{Fit22}.
\end{rem}

\begin{exa}[Chidambaram]\label{shiva}
For $r=4$ and $G$ with \cite{GAP} identifier $\langle 12,1\rangle$, there are locally quadratic twists that are not quadratic twists. Indeed, let $\theta$ denote the only rational $2$-dimensional irreducible representation of $G$ up to isomorphism, let $\varepsilon$ be the rational nontrivial character of $G$, and let $\chi$ denote any character of $G$ of order $4$. It is a straighforward computation to verify that $\triv\oplus \varepsilon \oplus \chi\otimes \theta$ and $\chi\oplus \varepsilon\chi \oplus \theta$ are locally quadratic twists, but are not quadratic twists.
\end{exa}

\begin{thm}\label{thm: locpolytwists}
Suppose that $r=2$. If $\varrho$ and $\varrho'$ are locally polyquadratic twists, then they are polyquadratic twists.
\end{thm}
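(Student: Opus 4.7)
The plan is to reduce the statement to Ramakrishnan's theorem (Theorem \ref{thm: Ramakrishnan}) on an index-$2$ subgroup and then lift the resulting quadratic twist to $G$. First, since $\det(\varrho(s))^2 = \det(\varrho(s^2)) = \det(\varrho'(s^2)) = \det(\varrho'(s))^2$ for every $s \in G$, the character $\delta := \det\varrho \cdot (\det\varrho')^{-1}$ is quadratic. Set $H := \ker \delta$, a closed normal subgroup of $G$ of index at most $2$. For $h \in H$ one has $\det\varrho(h) = \det\varrho'(h)$ and, by the identity $\Tr(X^2) = \Tr(X)^2 - 2\det(X)$ valid for $X \in \GL_2$, the local polyquadratic hypothesis gives $\Tr(\varrho(h))^2 = \Tr(\varrho'(h))^2$. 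Hence $\varrho|_H$ and $\varrho'|_H$ are locally quadratic twists, and Theorem \ref{thm: Ramakrishnan} produces a quadratic character $\chi$ of $H$ with $\varrho'|_H \simeq \chi \otimes \varrho|_H$.

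If $\varrho \simeq \chi_1 \oplus \chi_2$ is reducible, the formula $\Tr\varrho'(s)^2 = \Tr\varrho'(s^2) + 2\det\varrho'(s)$ together with the hypothesis and $\det\varrho' = \delta\chi_1\chi_2$ yields $\Tr\varrho'(s) \in \{\pm(\chi_1+\chi_2)(s)\}$ on $H$ and $\Tr\varrho'(s) \in \{\pm(\chi_1-\chi_2)(s)\}$ on $G \setminus H$. A Schur orthogonality computation then shows $\int_G|\Tr\varrho'|^2 \in \{2,4\}$, incompatible with $\varrho'$ being irreducible. Writing $\varrho' \simeq \chi'_1 \oplus \chi'_2$, linear independence of characters applied to $\chi_1(s)^2 + \chi_2(s)^2 = {\chi'_1}(s)^2 + {\chi'_2}(s)^2$ gives $\{\chi_1^2, \chi_2^2\} = \{{\chi'_1}^2, {\chi'_2}^2\}$ as multisets, so after a re-indexing ${\chi'_i}/\chi_i$ is quadratic for each $i$, which is the polyquadratic twist structure of Definition \ref{def-polyquadratic-twists}.

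If $\varrho$ is irreducible, a symmetric argument shows $\varrho'$ is also irreducible. Using $\Tr(\varrho(s^2)) = \Tr(\Sym^2\varrho(s)) - \det\varrho(s)$ recasts the local polyquadratic hypothesis as the isomorphism
\begin{equation*}
\Sym^2\varrho \oplus \det\varrho' \simeq \Sym^2\varrho' \oplus \det\varrho
\end{equation*}
of semisimple representations of $G$. When $\delta = 1$ this collapses to $\Sym^2\varrho \simeq \Sym^2\varrho'$, hence $\ad^0\varrho \simeq \ad^0\varrho'$, and the adjoint form of Ramakrishnan's theorem invoked in the proof of Theorem \ref{thm: Ramakrishnan} gives $\varrho' \simeq \chi\varrho$ with $\chi$ a quadratic character of $G$. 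When $\delta \neq 1$, substituting $\det\varrho = \delta\det\varrho'$ and comparing multiplicities of the distinct $1$-dimensional constituents $\det\varrho'$ and $\delta\det\varrho'$ on both sides shows that $\Sym^2\varrho$ contains $\delta\det\varrho'$ as a direct summand. Since $\Sym^2\varrho \simeq \ad^0\varrho \otimes \det\varrho$ is irreducible unless $\varrho$ is of dihedral type, this forces $\varrho \simeq \Ind_{H'}^G\theta$ for some index-$2$ subgroup $H' \leq G$ and character $\theta$ of $H'$ with $\theta \neq \theta^g$ for $g \notin H'$. A Mackey computation yields $\Sym^2\varrho \simeq \Ind_{H'}^G(\theta^2) \oplus \eta'\det\varrho$, where $\eta'$ denotes the nontrivial character of $G/H'$, and analogously for $\varrho'$; plugging this back into the displayed isomorphism and invoking Clifford theory matches $\theta$ and the inducing character $\theta'$ of $\varrho'$ up to a quadratic character of $H'$ that extends to $G$, giving the required polyquadratic twist structure on $G$.

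The principal technical obstacle is precisely this irreducible dihedral sub-case when $\delta \neq 1$: one must track two layers of index-$2$ subgroups simultaneously, namely $H = \ker\delta$ and the inducing subgroup $H'$, and propagate the quadratic twists coherently through the induction. The most delicate situation occurs when $H = H'$ or when $\varrho$ admits more than one dihedral presentation (the biquadratic case), where the classification of self-twists of $\varrho$ must be invoked in order to separate the roles of the various quadratic characters involved.
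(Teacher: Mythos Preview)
Your case split along $\delta = \det\varrho/\det\varrho'$ matches the paper's organization around $\varepsilon = \det\varrho'/\det\varrho$, and your treatment of the irreducible case with $\delta = 1$ is exactly the paper's argument. The reducible case is also fine in spirit, though your Schur-orthogonality computation is sketched rather than carried out; the paper instead matches the ratios $\chi_1/\chi_2$ and $\chi_1'/\chi_2'$ directly, which is a bit cleaner.

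The real problem is the irreducible case with $\delta \neq 1$. You correctly deduce from
\[
\Sym^2\varrho \oplus \det\varrho' \;\simeq\; \Sym^2\varrho' \oplus \det\varrho
\]
that $\Sym^2\varrho$ must contain $\delta\det\varrho'$ as a summand. But $\delta\det\varrho' = \det\varrho$, so what you have actually shown is $\det\varrho \subseteq \Sym^2\varrho$, i.e.\ $\triv \subseteq \ad^0\varrho$. For a $2$-dimensional representation this forces $\varrho$ to be \emph{reducible}, not merely dihedral: if $\varrho$ is irreducible then $\langle \ad\varrho,\triv\rangle = \langle \varrho,\varrho\rangle = 1$, whence $\langle \ad^0\varrho,\triv\rangle = 0$. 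So the irreducible $\delta\neq 1$ case is in fact empty, and your entire dihedral analysis --- which you yourself flag as incomplete in the final paragraph --- is unnecessary. The paper makes exactly this observation, packaged as the one-line multiplicity computation
\[
\langle \triv,\ad^0\varrho'\rangle \;=\; 1 + \langle \varepsilon,\ad^0\varrho\rangle \;\geq\; 1,
\]
which immediately shows that both $\varrho$ and $\varrho'$ are reducible whenever $\varepsilon$ is nontrivial.

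In short: replace your dihedral digression by the remark that $\det\varrho \subseteq \Sym^2\varrho$ already contradicts irreducibility of $\varrho$, and the proof is complete. (Your opening paragraph, where you restrict to $H=\ker\delta$ and invoke Theorem~\ref{thm: Ramakrishnan} to produce a quadratic character $\chi$ of $H$, is never used afterwards and can be dropped.)
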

\begin{proof}
By Remark \ref{Ebar} we may assume that $E$ is algebraically closed. For every $s\in G$, let $\alpha_s, \beta_s\in E$ be such that
$$
\det(1-\varrho(s)T)=(1-\alpha_s T)(1-\beta_s T)\,.
$$
By assumption, there exist $\psi_s,\varphi_s\in \{\pm 1 \}$ such that
$$
\det(1-\varrho'(s)T)=(1-\psi_s\alpha_s T)(1-\varphi_s\beta_s T)\,.
$$
The map $\varepsilon: G\rightarrow \{\pm 1\}$ mapping $s\in G$ to 
\begin{equation}\label{epsilon}
\varepsilon(s):=\frac{\psi_s}{\varphi_s}=\frac{\det(\varrho')}{\det(\varrho)}(s)
\end{equation}
is a quadratic character.

Suppose that $\varepsilon$ is trivial. Then $\varrho$ and $\varrho'$ are locally quadratic twists, and hence by Theorem \ref{thm: Ramakrishnan} they are quadratic twists.

Now suppose that $\varepsilon$ is nontrivial. We will show in the next paragraph that both $\varrho$ and $\varrho'$ are reducible. Assuming this, there exist characters $\chi_1, \chi_2, \chi_1', \chi_2':G_K\rightarrow E^ \times$ such that $\varrho\simeq \chi_ 1 \oplus \chi_2$ and $\varrho'\simeq \chi_ 1' \oplus \chi_2'$.
By comparing traces, one immediately verifies that
$$
\frac{\chi_1}{\chi_2}\oplus \frac{\chi_2}{\chi_1} \simeq \frac{\varepsilon \chi'_1}{\chi'_2}\oplus \frac{\varepsilon \chi'_2}{\chi'_1}\,.
$$
Up to renaming $\chi_1'$, $\chi_2'$, we may assume $\chi_1/\chi_2 =\varepsilon \chi'_1/\chi'_2$. Also, by \eqref{epsilon} we have that $\chi_1\chi_2= \varepsilon \chi_1'\chi_2'$. Multiplying the last two equations we get $(\chi_1)^2=\varepsilon^2(\chi'_1)^2=(\chi'_1)^2$, which implies that $\chi_1$ and $\chi'_1$ differ by a quadratic character. The same holds for $\chi_2$ and $\chi'_2$. Hence $\varrho$ and $\varrho'$ are polyquadratic twists. 

We now prove that $\varrho'$ is reducible (the proof for $\varrho$ being the same) by showing that the multiplicity of the trivial representation $\triv$ in $\ad^0(\varrho')$ is positive (observe that $\ad^0(\varrho')$ is semisimple by \cite[p. 88]{Che54}). A straightforward computation shows that
$$
\ad^0(\varrho')\simeq \triv - \varepsilon + \varepsilon \otimes \ad^0(\varrho)
$$
as virtual representations.
Hence the multiplicity of $\triv$ in $\ad^0(\varrho')$ satisfies 
$$
\langle \triv, \ad^0(\varrho')\rangle = 1+ \langle \triv, \varepsilon \otimes \ad^ 0(\varrho)\rangle\geq 1 \,,
$$
where we have used that $\langle \triv, \varepsilon \rangle =0$, as $\varepsilon$ is nontrivial.
\end{proof}

\begin{exa}\label{example: counterexample deg3}
For $r=3$ there are locally polyquadratic twists that are not polyquadratic twists. A straightforward computation shows that such an example is given by any pair of nonisomorphic faithful irreducible degree $3$ representations $\varrho$ and $\varrho'$ of the group $G$ with \cite{GAP} identifier $\langle 48,3\rangle$.
This also yields counterexamples for any $r\geq 4$: if $\theta$ is any representation of degree $r-3$, consider $\varrho\oplus \theta$ and $\varrho'\oplus \theta$.
\end{exa}

\subsection{Polyquadratic twists of abelian varieties} In this section, we consider $\ell$-adic representations of abelian varieties. For some prime $\ell$, let 
$$
\varrho_{A,\ell}:G_K \rightarrow \Aut(V_\ell(A))
$$ 
denote the $\ell$-adic representation attached to $A$, where $V_\ell(A)=T_\ell(A) \otimes \mathbb Q$ and  $T_\ell(A)$ is the $\ell$-adic Tate module of $A$ (use the analogous notation for $A'$). For $\p \in \Sigma_K$, let $\Frob_\p$ denote an arithmetic Frobenius at $\p$. From Weil, we know that, for every $\p\nmid \ell$ of good reduction for $A$, the polynomial $\det(1-\varrho_{A,\ell}(\Frob_\p)T)$ is well defined and does not depend on the choice of $\ell$.

\begin{prop}\label{reducing-to-rho}
The abelian varieties $A$ and $A'$ are polyquadratic twists if and only if $\varrho_{A,\ell}$ and~$\varrho_{A',\ell}$ are polyquadratic twists for some $\ell$ (equivalently, for all $\ell$).
Moreover, they are locally polyquadratic twists if and only if $\varrho_{A,\ell}$ and $\varrho_{A',\ell}$ are locally polyquadratic twists for some~$\ell$ (equivalently, for all $\ell$).
\end{prop}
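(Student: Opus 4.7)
The plan is to reduce both equivalences to Faltings' isogeny theorem, using Proposition \ref{proposition: polyquadratictwists} for the global statement and a Chebotarev/continuity argument for the local one.

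For the global assertion, the key observation is that Proposition \ref{proposition: polyquadratictwists} characterises polyquadratic-twist pairs of representations as those becoming isomorphic after restriction to some open normal subgroup $H\subseteq G_K$ with $G_K/H$ of exponent dividing $2$. If $A$ and $A'$ are polyquadratic twists, witnessed by an isogeny over a polyquadratic extension $L/K$, then Faltings' isogeny theorem gives $\varrho_{A,\ell}|_{G_L}\simeq \varrho_{A',\ell}|_{G_L}$, and Proposition \ref{proposition: polyquadratictwists} applied with $H=G_L$ yields the conclusion for every $\ell$. Conversely, if $\varrho_{A,\ell}$ and $\varrho_{A',\ell}$ are polyquadratic twists for some $\ell$, Proposition \ref{proposition: polyquadratictwists} produces such a subgroup $H$; setting $L=\Qbar^H$, a polyquadratic extension of $K$, and invoking Faltings converts $\varrho_{A,\ell}|_H\simeq \varrho_{A',\ell}|_H$ into an isogeny between $A_L$ and $A'_L$, witnessing that $A$ and $A'$ are polyquadratic twists. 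The equivalence of the ``some $\ell$" and ``all $\ell$" versions is automatic, since being polyquadratic twists is an intrinsic property of $A$ and $A'$, not of $\ell$.

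For the local assertion, I would first note that a polyquadratic extension of a finite residue field $\F_{q_\p}$ is either trivial or $\F_{q_\p^2}$; hence $A_\p$ and $A'_\p$ are polyquadratic twists if and only if they become isogenous over $\F_{q_\p^2}$. By Tate's isogeny theorem over finite fields, this is equivalent to the identity
$$\det(1-\varrho_{A,\ell}(\Frob_\p^2)T)=\det(1-\varrho_{A',\ell}(\Frob_\p^2)T),$$
so ``$A$ and $A'$ are locally polyquadratic twists" amounts to this identity holding for almost all $\p$. To pass from ``almost all Frobenius" to ``every $s\in G_K$" I would appeal to the continuity and conjugation-invariance of the characteristic polynomial: the locus where the identity holds is a closed conjugation-invariant subset of $G_K$, and Serre's extension of the Chebotarev density theorem to $\ell$-adic representations shows that the Frobenius images $(\varrho_{A,\ell}\oplus \varrho_{A',\ell})(\Frob_\p)$ equidistribute in the compact image of $\varrho_{A,\ell}\oplus \varrho_{A',\ell}$; hence a closed conjugation-invariant subset containing almost every such Frobenius must coincide with the whole image, giving the equivalence with the condition of Definition 2.1.

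The main technical point is precisely this Chebotarev/continuity step, since the $\ell$-adic representations do not factor through any finite quotient; once it is set up via Serre's equidistribution theorem, the rest of the argument is a direct unwinding of definitions. Independence of $\ell$ in the local case then follows from the well-known $\ell$-independence of Frobenius characteristic polynomials at primes of good reduction.
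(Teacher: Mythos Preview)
Your proof is correct and matches the paper's approach: Faltings plus Proposition~\ref{proposition: polyquadratictwists} for the global part, Tate's theorem over finite fields plus Chebotarev for the local part. The only cosmetic difference is that for the Chebotarev step you invoke Serre's equidistribution theorem, whereas the paper (and the standard argument) only needs that Frobenii at a density-$1$ set of primes are dense in the profinite image of $\varrho_{A,\ell}\oplus\varrho_{A',\ell}$, which already follows from classical Chebotarev applied to finite quotients---since the locus where the two characteristic polynomials of $s^2$ agree is closed, density suffices and no measure-theoretic input is required.
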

\begin{proof}
The first assertion is an immediate consequence of Faltings' isogeny theorem \cite{Fal83} and Proposition \ref{proposition: polyquadratictwists}, so consider the second assertion. Since $A$ and $A'$ being locally polyquadratic twists does not depend on $\ell$, it suffices to prove the statement for some fixed prime $\ell$. 
By the Chebotarev density theorem it suffices to show the following: if $\p\in \Sigma_K$ is a prime of good reduction for $A$ and $A'$ with $\p\nmid \ell$, then $A_\p$ and $A'_\p$ are polyquadratic twists if and only if
\begin{equation}\label{equation: equality over quad}
\det(1-\varrho_{A,\ell}(\Frob_\p^ 2)T)= \det(1-\varrho_{A',\ell}(\Frob_\p^2)T)
\end{equation}
holds. And this is clear because, by \cite[Thm.\ 1]{Tat66}, \eqref{equation: equality over quad} amounts to saying that the base changes of $A_\p$ and $A'_\p$ to the quadratic extension of the residue field $K(\p)$ of $K$ at $\p$ are isogenous.
\end{proof}

By the above proposition and Remark \ref{rem: locpol implies pol}, we see that if $A$ and $A'$ are polyquadratic twists, then they are locally polyquadratic twists. This may be seen more directly by means of the following remark.

\begin{rem}\label{explanation}
Suppose that $A$ and $A'$ are polyquadratic twists. Then 
$A$ and $A'$ are locally polyquadratic twists at almost all primes of $\Sigma_K$.
Indeed, choose a polyquadratic extension~$L/K$ and an isogeny $f: A'_L\rightarrow A_L$. There exists a finite set $S\subseteq \Sigma_K$ such that for every $\mathfrak p\in \Sigma_K\setminus S$ the abelian varieties $A$ and $A'$ have good reduction at $\mathfrak p$ and, if $\mathfrak P\in \Sigma_L$ lies over $\mathfrak p$, then there is an isogeny $f_{\mathfrak P}: A'_{L, \mathfrak P}\rightarrow A_{L, \mathfrak P}$ making the diagram
\[
  \begin{tikzcd}
    A_L \arrow{r}{f} \arrow[swap]{d}{\mathrm{red}} & A'_L \arrow{d}{\mathrm{red}} \\
    A_{L,\mathfrak P} \arrow[swap]{r}{f_\mathfrak P} & A'_{L,\mathfrak P}
  \end{tikzcd}
\]
commutative.
\end{rem}

\begin{rem}
Let $\varrho$ and $\varrho'$ be as in \S\ref{section: preliminaries}. The  argument of the proof of Proposition \ref{proposition: polyquadratictwists} shows that there exists a normal subgroup $H\subseteq G$ of index $2$ such that $\varrho|_H\simeq \varrho'|_H$ if and only if $\varrho$ and $\varrho'$ admit decompositions as in \eqref{equation: polquadtwist} such that each of the characters $\chi_i$ is either the trivial or the nontrivial character of $G/H$. Hence, if $A$ and $A'$ are abelian varieties that become isogenous over a quadratic extension $L/K$, then each of the characters $\chi_i$ relating their $\ell$-adic representations is either the trivial or the nontrivial character of $\Gal(L/K)$. According to our definition, $A$ and $A'$ are called quadratic twists precisely when all of the $\chi_i$ can be taken to be equal.
\end{rem}

We can use Proposition \ref{reducing-to-rho} to show that several of the notions defined in the introduction in fact coincide in dimension $1$.

\begin{lem}\label{lemma: dimension1} Let $A$ and $A'$ be elliptic curves. The following are equivalent:
\begin{enumerate}[i)]
\item $A$ and $A'$ are quadratic twists. 
\item $A$ and $A'$ are polyquadratic twists.
\item$ A$ and $A'$ are locally quadratic twists.
\item $A$ and $A'$ are locally polyquadratic twists.
\end{enumerate}
\end{lem}

\begin{proof}
Since $i) \Rightarrow ii) \Rightarrow iv)$ and $i) \Rightarrow iii) \Rightarrow iv)$ are obvious, we only need to prove that $iv) \Rightarrow i)$. Suppose that $A$ and $A'$ are locally polyquadratic twists. The fact that $\det\varrho_{A,\ell}=\det\varrho_{A',\ell}=\chi_\ell$, where $\chi_\ell$ denotes the $\ell$-adic cyclotomic character, implies that $A$ and $A'$ must in fact be locally quadratic twists. We can now conclude with Theorem \ref{thm: Ramakrishnan}. 
\end{proof}

In \S\ref{section: proof}, we will make use of the following lemma to prove Theorem \ref{theorem: main}.
 
\begin{lem}\label{lemma: handy}
Let $A$ and $A'$ be abelian surfaces and let $\p$ be a prime of good reduction for $A$ and $A'$. If $A_\p$ and $A'_\p$ are polyquadratic twists and for some prime $\ell$ the traces of $\varrho_{A,\ell}(\Frob_\p)$ and $\varrho_{A',\ell}(\Frob_\p)$ are both zero, then $A_\p$ and $A'_\p$ are quadratic twists.
\end{lem}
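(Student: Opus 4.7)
The plan is to use the vanishing trace hypothesis to show that the Frobenius characteristic polynomial of an abelian surface over a finite field with $a_\p=0$ is even in $T$, and then to observe that becoming isogenous over the unique quadratic extension of $K(\p)$ forces isogeny already over $K(\p)$ itself.

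First, I would set $q=\#K(\p)$ and write
\[
P_{A_\p}(T):=\det(1-\varrho_{A,\ell}(\Frob_\p)T)=1+a_1 T+a_2 T^2+a_3 T^3+q^2 T^4.
\]
The stability of the Frobenius eigenvalue multiset under $\alpha\mapsto q/\alpha$ (Weil's functional equation) gives $a_3=q a_1$. Since $a_1=-\Tr\varrho_{A,\ell}(\Frob_\p)=0$ by hypothesis, both odd-degree coefficients vanish, so $P_{A_\p}(T)=1+a_2 T^2+q^2 T^4$, and likewise $P_{A'_\p}(T)=1+a_2' T^2+q^2 T^4$.

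Next, I would exploit the polyquadratic twist hypothesis. Since any polyquadratic extension of the finite field $K(\p)$ is either trivial or the unique quadratic one, the assumption that $A_\p$ and $A'_\p$ are polyquadratic twists is equivalent, by Tate's isogeny theorem, to the equality of the characteristic polynomials of $\Frob_\p^2$, i.e., to
\[
P_{A_\p}(T)\,P_{A_\p}(-T)=P_{A'_\p}(T)\,P_{A'_\p}(-T).
\]
Substituting the even polynomials obtained above yields $(1+a_2T^2+q^2T^4)^2=(1+a_2'T^2+q^2T^4)^2$, whence $a_2=a_2'$ (both factors have constant term~$1$).

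Therefore $P_{A_\p}=P_{A'_\p}$, and Tate's theorem gives $A_\p\sim A'_\p$ over $K(\p)$, so $A_\p$ and $A'_\p$ are in particular quadratic twists (via the trivial character). I do not anticipate serious obstacles; the only mildly delicate point is translating the polyquadratic twist hypothesis into an identity of $\Frob_\p^2$-polynomials, but this is immediate from Tate's theorem together with the fact that the quadratic extension of $K(\p)$ is the unique nontrivial polyquadratic extension available.
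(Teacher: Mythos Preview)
Your proof is correct and takes a genuinely different route from the paper's. The paper argues at the level of eigenvalues: writing the Frobenius eigenvalues of $A$ as $\alpha,\overline\alpha,\beta,\overline\beta$, it uses the polyquadratic twist hypothesis (together with the Weil pairing constraint $\alpha\overline\alpha=\beta\overline\beta=q$) to write those of $A'$ as $\epsilon\alpha,\epsilon\overline\alpha,\gamma\beta,\gamma\overline\beta$ with $\epsilon,\gamma\in\{\pm1\}$, and then the two trace-vanishing conditions give a $2\times 2$ linear system forcing either $\epsilon=\gamma$ or $\alpha+\overline\alpha=\beta+\overline\beta=0$; either alternative yields the quadratic twist conclusion. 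Your argument bypasses the eigenvalue bookkeeping entirely: the trace hypothesis and the functional equation make $P_{A_\p}$ and $P_{A'_\p}$ even, and then the polyquadratic twist hypothesis, rephrased via Tate as $P_{A_\p}(T)P_{A_\p}(-T)=P_{A'_\p}(T)P_{A'_\p}(-T)$, collapses to $P_{A_\p}^2=P_{A'_\p}^2$ and hence $P_{A_\p}=P_{A'_\p}$. This is a little slicker and in fact yields the nominally stronger conclusion that $A_\p$ and $A'_\p$ are isogenous over $K(\p)$; the paper's argument gives the same once one notices that an even $P_{A_\p}$ is invariant under $T\mapsto -T$, so the nontrivial quadratic twist is already isogenous. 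The paper's approach, on the other hand, makes more transparent how the polyquadratic structure interacts with the Weil pairing on eigenvalues.
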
 
\begin{proof}
Let $\alpha, \overline \alpha, \beta,\overline \beta$ be the eigenvalues of $\varrho_{A,\ell}(\Frob_\p)$. Then there exist $\epsilon,\gamma\in \{ \pm 1\}$ such that
$\epsilon \alpha, \epsilon\overline \alpha, \gamma\beta,\gamma\overline \beta$ are the eigenvalues of $\varrho_{A',\ell}(\Frob_\p)$. The vanishing of the traces yields the equation
$$
\begin{pmatrix}
1 & 1\\
\epsilon & \gamma
\end{pmatrix}
\begin{pmatrix}
\alpha+\overline\alpha\\
\beta+\overline\beta
\end{pmatrix}=\begin{pmatrix}0\\0\end{pmatrix}\,.
$$ 
This implies that $\epsilon=\gamma$ or $\alpha+\overline\alpha=\beta+\overline\beta=0$. Either of the possibilities shows that $A_\p$ and~$A'_\p$ are quadratic twists.
\end{proof}

We conclude this section by recalling a result that will be used multiple times in the proof of Theorem \ref{theorem: main}.
In \S\ref{section: grouptheoretic} we observed that Theorem \ref{thm: Ramakrishnan} does not remain true in general for representations of degree $r\geq 4$. However, the following result asserts that Theorem \ref{thm: Ramakrishnan} remains true in degrees 4 and 6 if one restricts to $\ell$-adic representations attached to abelian surfaces or threefolds.

\begin{thm}[\cite{Fit22}]\label{theorem: Fite} 
Suppose that $A$ and $A'$ have dimension at most $3$. If $\varrho_{A,\ell}$ and $\varrho_{A',\ell}$ are locally quadratic twists, then they are quadratic twists. 
\end{thm}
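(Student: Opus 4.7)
The plan is to combine the group-theoretic criterion of Lemma \ref{lemma: gpcharac} with the rigidity theorems of Rajan and Ramakrishnan, organised by a decomposition-and-pairing argument on the irreducible constituents of the $\ell$-adic representations. Write $\varrho = \varrho_{A,\ell}$ and $\varrho' = \varrho_{A',\ell}$, and set $g = \dim A \leq 3$. The goal is to produce a quadratic character $\chi$ of $G_K$ with $\varrho' \simeq \chi \otimes \varrho$. By Faltings' theorem \cite{Fal83} we may assume both $A$ and $A'$ are already isogenous to a product of simple factors, so that $\varrho \simeq \bigoplus_i \sigma_i$ and $\varrho' \simeq \bigoplus_j \sigma_j'$ as sums of $G_K$-irreducibles.

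\textbf{Step 1: Translate to tensor identities.} For $g=2$, Lemma \ref{lemma: gpcharac} gives $\Sym^2\varrho \simeq \Sym^2\varrho'$ and $\wedge^2\varrho \simeq \wedge^2\varrho'$; combined with the polarisation-induced self-duality $\varrho^\vee \simeq \varrho \otimes \chi_{\mathrm{cyc}}^{-1}$, this yields $\ad^0(\varrho) \simeq \ad^0(\varrho')$ and $\det\varrho \simeq \det\varrho'$. An analogous virtual-character computation in degree six handles $g=3$. \textbf{Step 2: Pair the simple factors.} The hypothesis $\det(1-\varrho(\Frob_\p)T) = \det(1-\epsilon_\p\varrho'(\Frob_\p)T)$ for almost all $\p$ forces, after Chebotarev, a bijection $\sigma_i \leftrightarrow \sigma_i'$ under which each pair is a locally quadratic twist of rank $2$, $4$, or $6$. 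For the rank-$2$ pairs, Theorem \ref{thm: Ramakrishnan} immediately produces a quadratic character $\chi_i$ with $\sigma_i' \simeq \chi_i \otimes \sigma_i$. For higher-rank simple factors I would apply Rajan's multiplicity-one result \cite[Thm.~B]{Raj98} to the isomorphism $\ad^0(\sigma_i) \simeq \ad^0(\sigma_i')$, provided $\sigma_i$ is strongly absolutely irreducible; when strong absolute irreducibility fails (as for abelian surfaces with quaternionic multiplication, or CM/quaternionic threefolds), pass to a finite extension $L/K$ over which $\sigma_i$ further decomposes, apply the rank-$2$ case there, and descend via Frobenius reciprocity. The finite list of Mumford--Tate/Sato--Tate structures available in dimensions $\leq 3$ (as used in \S\ref{section: proof}) makes this case analysis exhaustive.

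\textbf{Step 3 (the main obstacle): Glue the $\chi_i$ into a single $\chi$.} The core difficulty is precisely what distinguishes quadratic from polyquadratic twists: on each paired factor one initially obtains only a \emph{private} quadratic character $\chi_i$, whereas the hypothesis provides a \emph{single} sign $\epsilon_s$ rescaling all eigenvalues of $\varrho(s)$ simultaneously. To force agreement of the $\chi_i$, I would select, by a Chebotarev density argument, Frobenius elements $\Frob_\p$ whose characteristic polynomial has nonvanishing trace on every $\sigma_i$; a variant of Lemma \ref{lemma: handy} then imposes $\chi_i(\Frob_\p) = \chi_j(\Frob_\p) = \epsilon_\p$ for all pairs $i,j$, pinning down one common quadratic character. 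The boundary case in which some $\sigma_i$ is forced to have trace zero on the relevant Frobenius class (e.g.\ when $\sigma_i$ is induced from an index-two subgroup) needs separate treatment via the structure of the induction, but the characters $\chi_i$ arising there still factor through the same index-two quotient of $G_K$, so they can still be collated into a single $\chi$. Assembling the pieces, $\varrho' \simeq \chi \otimes \varrho$, completing the proof.
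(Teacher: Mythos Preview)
First, note that the paper under review does not actually prove Theorem~\ref{theorem: Fite}: it is quoted from \cite{Fit22} as an external input and no argument is given here. So there is no in-paper proof to compare against. What one can infer from the way \cite{Fit22} is invoked throughout \S\ref{section: squares}--\ref{section: proof} is that the proof there proceeds by a Sato--Tate/absolute-type case analysis, not by the generic decompose--pair--glue scheme you outline.

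Your sketch has two genuine gaps. In Step~2, the assertion that the hypothesis ``forces, after Chebotarev, a bijection $\sigma_i \leftrightarrow \sigma_i'$ under which each pair is a locally quadratic twist'' is unjustified. The single sign $\epsilon_s$ scales the full multiset of eigenvalues of $\varrho(s)$, but nothing forces the eigenvalues of a given $\sigma_i(s)$ to land inside a single $\sigma_j'(s)$ rather than scattering across several constituents. Example~\ref{shiva} shows that for general degree-$4$ representations no such pairing compatible with a quadratic twist need exist, so any argument here must use something specific to $\ell$-adic representations of abelian varieties, and you have not said what. In Step~3, the gluing via ``a variant of Lemma~\ref{lemma: handy}'' is too loose: that lemma treats one Frobenius at a time under the hypothesis that both traces already vanish, whereas you need a density-one set of $\p$ on which the trace of \emph{every} $\sigma_i$ is simultaneously nonzero, and then you need the resulting pointwise equalities $\chi_i(\Frob_\p)=\chi_j(\Frob_\p)$ to propagate to all of $G_K$. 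In your own ``boundary case'' where some $\sigma_i$ is induced from an index-two subgroup, half the Frobenii have trace zero on that factor, so the nonvanishing hypothesis fails on a positive-density set and the argument does not close as written. The proof in \cite{Fit22} sidesteps both issues by classifying the possible decomposition shapes and handling each separately.
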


\subsection{A theorem by Khare and Larsen.} We denote by $\overline A_\p$ the base change of $A_\p$ to the algebraic closure of $K(\p)$, and similarly define $\overline A'_\p$. The following result of Khare and Larsen \cite{KL20} will play an important role in our investigation.

\begin{thm}[Khare--Larsen]\label{theorem: KhareLarsen}
The abelian varieties $A_\Qbar$ and $A'_\Qbar$ are isogenous if and only if for almost all primes $\p\in \Sigma_K$ the abelian varieties $\overline A_\p$ and~ $\overline A'_\p$ are isogenous.
\end{thm}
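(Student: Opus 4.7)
The ``only if'' direction is obtained by spreading out a geometric isogeny, exactly as in Remark \ref{explanation}: choose an isogeny $f \colon A_L \to A'_L$ over some finite extension $L/K$; for almost all $\p \in \Sigma_K$ and any $\mathfrak P \in \Sigma_L$ above $\p$, reducing $f$ modulo $\mathfrak P$ yields an isogeny between $A_{L,\mathfrak P}$ and $A'_{L,\mathfrak P}$, and base changing to an algebraic closure of $K(\p)$ gives $\overline A_\p \sim \overline A'_\p$.

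For the converse, I would pass to $\ell$-adic representations. Fix a prime $\ell$ and consider $\varrho := \varrho_{A,\ell} \oplus \varrho_{A',\ell}$, which is the $\ell$-adic representation attached to the abelian variety $A \times A'$. Let $\mathbf G \subseteq \GL(V_\ell(A)) \times \GL(V_\ell(A'))$ be the Zariski closure of the image of $\varrho$, let $\mathbf G^0$ be its identity component, and let $L/K$ be the finite Galois extension with $G_L = \varrho^{-1}(\mathbf G^0(\Q_\ell) \cap \varrho(G_K))$. By Bogomolov's theorem on $\ell$-adic Galois images of abelian varieties, $\varrho(G_L)$ is an open subgroup of $\mathbf G^0(\Q_\ell)$, and in particular Zariski-dense in $\mathbf G^0$.

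The hypothesis $\overline A_\p \sim \overline A'_\p$ together with Tate's theorem \cite{Tat66} implies that for every $\p \in \Sigma_K$ of good reduction with $\p \nmid \ell$ there exists an integer $n_\p \geq 1$ satisfying
\[
\det(1 - \varrho_{A,\ell}(\Frob_\p^{n_\p}) T) = \det(1 - \varrho_{A',\ell}(\Frob_\p^{n_\p}) T).
\]
For each integer $n \geq 1$, the locus
\[
X_n := \{\, (g_1, g_2) \in \mathbf G^0 : \det(1 - g_1^n T) = \det(1 - g_2^n T) \,\}
\]
is Zariski-closed in $\mathbf G^0$. By the Chebotarev density theorem the Frobenii in $G_L$ are Zariski-dense in $\mathbf G^0$, and each of them belongs to some $X_n$. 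Since $\overline{\Q_\ell}$ is uncountable and $\mathbf G^0$ is Zariski-irreducible, it cannot be a countable union of proper closed subvarieties, so there exists a single $n$ with $X_n = \mathbf G^0$.

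The main obstacle is the passage from ``some $n$-th power matches'' to ``the element itself matches''. For this I would use that the $n$-th power map $\mathbf G^0 \to \mathbf G^0$ is surjective on $\overline{\Q_\ell}$-points: since $\mathbf G^0$ is a connected reductive group in characteristic zero (reductivity is Faltings), every element has a Jordan decomposition $g = g_s g_u$, the semisimple part $g_s$ lies in some maximal torus and admits an $n$-th root there, and the unipotent part $g_u = \exp(X)$ equals $\exp(X/n)^n$ in the centralizer of $g_s$. Hence $X_n = \mathbf G^0$ upgrades to $X_1 = \mathbf G^0$, so $\varrho_{A,\ell}|_{G_L}$ and $\varrho_{A',\ell}|_{G_L}$ have matching Frobenius characteristic polynomials at almost all primes of $L$. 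Faltings' isogeny theorem \cite{Fal83} applied over $L$ then gives $A_L \sim A'_L$, whence $A_\Qbar \sim A'_\Qbar$.
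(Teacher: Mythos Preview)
Your approach is quite different from the paper's and attempts much more. The paper simply invokes \cite[Thm.\ 1]{KL20} as a black box in the form ``$\Hom(\overline A_\p,\overline A'_\p)\neq 0$ for almost all $\p$ implies $\Hom(A_\Qbar,A'_\Qbar)\neq 0$'', and then upgrades this to an isogeny by a short induction on the number of simple isogeny factors of $A_\Qbar$ (peeling off a common simple factor and repeating). You instead try to reprove the full statement through the algebraic monodromy group, which is closer in spirit to what Khare and Larsen themselves do. That is a reasonable strategy, but as written it contains two genuine gaps.

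First, the countable-union step does not go through. You only show that a countable Zariski-dense set of Frobenii lies in $\bigcup_n X_n$; this does \emph{not} imply $\mathbf G^0 = \bigcup_n X_n$, so the uncountability of $\overline{\Q_\ell}$ is irrelevant. (Toy picture: $\Z\subset\mathbb A^1$ is Zariski-dense and is the union of the proper closed sets $\{n\}$.) The correct argument is measure-theoretic: by the $\ell$-adic Chebotarev theorem the Frobenii are equidistributed in the compact group $\varrho(G_L)$ for Haar measure; a proper Zariski-closed subset of $\mathbf G^0$ meets $\varrho(G_L)$ in a set of Haar measure zero, so if every $X_n$ were proper their union would have measure zero and could not contain a density-$1$ set of Frobenii.

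Second, your surjectivity claim for the $n$-th power map on a connected reductive group is false. For $n=2$ the element $\bigl(\begin{smallmatrix}-1&1\\0&-1\end{smallmatrix}\bigr)\in\SL_2(\overline{\Q_\ell})$ is not a square: any square root would have eigenvalues $\pm i$, but eigenvalues $i,-i$ force the square to be $-I$, while repeated eigenvalues $\pm i$ give determinant $-1$. In your Jordan-decomposition argument the chosen $n$-th root $h_s$ of $g_s$ need not commute with $h_u$, so $(h_sh_u)^n\neq g$. The conclusion $X_1=\mathbf G^0$ is nevertheless salvageable: the $n$-th power map has differential $n\cdot\id$ at the identity, hence is dominant in characteristic~$0$; its (Zariski-dense) image is contained in the closed set $X_1$, whence $X_1=\mathbf G^0$.
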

\begin{proof}
For the ``only if" implication we may reason as in Remark \ref{explanation}. Now suppose that for almost all  $\p$ the abelian varieties $\overline A_\p$ and~ $\overline A'_\p$ are isogenous. We invoke \cite[Thm.\ 1]{KL20}, which asserts: 
\begin{equation}\label{equation: KhareLarsen}
\text{If } \Hom(\overline A_\p,\overline A_\p')\not = 0 \text{ for almost all $\p\in \Sigma_K$, then }\Hom( A_\Qbar,A_\Qbar')\not =0\,.
\end{equation}
The theorem follows from \eqref{equation: KhareLarsen} by induction on the number $n$ of simple isogeny factors of $A_\Qbar$, as we now explain. Note that the assumption implies, in particular, that $A$ and $A'$ have the same dimension. Hence if $n=1$, the existence of a nontrivial homomorphism from $A_\Qbar$ to $A_\Qbar'$ is equivalent to $A_\Qbar$ and $A_\Qbar'$ being isogenous. For general $n$, \eqref{equation: KhareLarsen} implies that $A_\Qbar$ and $A_\Qbar'$ have a common simple isogeny factor. Let $B$ and $B'$ be the complements of this simple isogeny factor in $A_\Qbar$ and $A'_\Qbar$, respectively. Then, for almost all $\p$ in $\Sigma_K$, the reductions of $B$ and $B'$ modulo $\p$ are isogenous. Since the number of simple isogeny factors in $B$ is $n-1$, by induction we find that $B$ and $B'$ are isogenous. Hence so are $A_\Qbar$ and $A'_\Qbar$.
\end{proof} 

\section{Squares of elliptic curves and quaternionic multiplication}\label{section: squares}

This section is devoted to proving Theorem \ref{theorem: main} in the following particular case:

\begin{quote}
Suppose that the abelian surfaces $A$ and $A'$ are locally polyquadratic twists, and that $A_\Qbar$ is either isogenous to the square of an elliptic curve or has quaternionic multiplication. Then $A$ and $A'$ are polyquadratic twists.
\end{quote}

By Theorem \ref{theorem: KhareLarsen}, the assumptions imply that there exists a finite Galois extension $L/K$ such that either:

\begin{itemize}
\item There is an elliptic curve $E$ defined over $L$ such that $A_L\sim E^ 2$ and $A'_L\sim E^ 2$; or
\item $\End(A_L)\otimes \Q \simeq \End(A'_L)\otimes \Q$ is a quaternion algebra.
\end{itemize}
We divide the first case into three subcases, according to whether $E$ has CM or not, and in the former case according to whether the imaginary quadratic field $M$ by which $E$ has CM is contained in $K$ or not.
Notice that by Proposition \ref{reducing-to-rho} it suffices to show that for some $\ell$ the representations $\varrho_{A,\ell}$ and $\varrho_{A',\ell}$ are polyquadratic twists. In some cases, we show that the hypothesis that $A$ and $A'$ are locally polyquadratic twists already implies the a priori stronger hypothesis that $A$ and $A'$ are locally quadratic twists. In these cases, we can conclude by directly applying Theorem \ref{theorem: Fite}.

\subsection{Non CM or quaternionic multiplication}
By \cite[Thm.\ 1.1]{FG22} (see also \cite[Thm.\ 4.5 (i)]{Fit22} for a restatement of this result in our situation) and after enlarging $L/K$ if necessary, there exist a number field $F$, Artin representations $\theta, \theta':\Gal(L/K)\rightarrow \GL_2(F)$, a prime $\ell$ totally split in $F$, and strongly absolutely irreducible $F$-rational $\ell$-adic representations $\varrho, \varrho'$ of $G_K$ of degree $2$ such that 
$$
\varrho_{A,\ell} \simeq \theta\otimes \varrho\,, \qquad \varrho_{A',\ell} \simeq \theta'\otimes \varrho'\,,\qquad \varrho|_L \simeq \varrho'|_L\,.
$$
In particular, there exists a character $\chi$ of $\Gal(L/K)$ such that $\varrho'\simeq \chi \otimes \varrho$.
Let $\alpha_{1,\p},\alpha_{2,\p}$ 
be the eigenvalues of $\varrho(\Frob_\p)$; $\beta_{1,\p},\beta_{2,\p}$ the eigenvalues of $\theta(\Frob_\p)$; and $\gamma_{1,\p},\gamma_{2,\p}$ the eigenvalues of $\chi\otimes\theta'(\Frob_\p)$. By \cite[Lem. 4.7]{Fit22}, there exists a density $1$ subset $\Sigma$ of $\Sigma_K$ of primes of good reduction for $A$ and $A'$ such that for every $\p\in \Sigma$ the quotient $\alpha_{1,\p}/\alpha_{2,\p}$ is not a root of unity, and there exist $\epsilon_{ij,\p}\in \{ \pm 1\}$ such that
$$
\prod_{i,j=1}^2(1- \alpha_{i,\p}\beta_{j,\p}T)=\prod_{i,j=1}^2(1-\epsilon_{ij,\p} \alpha_{i,\p}\gamma_{j,\p}T)\,.
$$ 
Hence, for every $\p\in \Sigma$ and for $i\in \{1,2\}$, we have 
$$
\prod_{j=1}^2(1- \beta_{j,\p}T)=\prod_{j=1}^2(1-\epsilon_{ij,\p} \gamma_{j,\p}T)\,.
$$
By the Chebotarev density theorem, this implies that $\theta$ and $\chi \otimes \theta'$ are locally polyquadratic twists and hence polyquadratic twists by Theorem \ref{thm: locpolytwists}. By Remark \ref{used} and the fact that $\varrho_{A',\ell}\simeq \theta'\otimes \chi\otimes \varrho$ and $\varrho_{A,\ell}\simeq \theta \otimes \varrho$, we conclude that $\varrho_{A,\ell}$ and $\varrho_{A',\ell}$ are polyquadratic twists. 

\subsection{CM over $K$}\label{section: CMoverK}
By \cite[Thm. 1.1]{FG22} (see also  \cite[Thm.\ 4.5 (ii)]{Fit22}), after enlarging $L/K$ if necessary, there exist a Galois number field $F$ containing $M$, Artin representations $\theta, \theta':\Gal(L/K)\rightarrow \GL_2(F)$, a prime $\ell$ totally split in $F$, and continuous $F$-rational $\ell$-adic characters $\psi, \psi'$ of $G_K$ such that
\begin{equation}\label{equation: ladicCM}
\varrho_{A,\ell}\simeq (\theta \otimes \psi) \oplus(\overline \theta \otimes  \overline\psi)\,, \quad \text{and}\quad \varrho_{A',\ell}\simeq (\theta' \otimes \psi') \oplus( \overline\theta' \otimes \overline\psi')\,.
\end{equation}
Here $\overline\theta$ (resp.\ $\overline\theta'$, $\overline \psi$, $\overline \psi'$) stands for the ``complex conjugate" of $\theta$ (resp.\ $\theta'$, $\psi$, $\psi'$). By this, more precisely we mean the following. Fix a complex conjugation $\overline \cdot $ in $\Gal(F/\Q)$, that is, a lift of the nontrivial element of $\Gal(M/\Q)$. That $\psi$ is an $F$-rational $\ell$-adic character implies that $\psi(\Frob_\p)\in F$ for almost all $\p \in \Sigma_K$. For any such $\p$, the $\ell$-adic character $\overline \psi$ satisfies $\overline \psi(\Frob_\p)=\overline{\psi(\Frob_\p)}$. Similar descriptions hold for $\overline\theta'$, $\overline \psi$, and $\overline \psi'$.

After reordering~$\psi$ and~$\overline\psi$ if necessary, we may assume that $\psi|_L\simeq \psi'|_L$. Hence there exists a character $\varphi$ of $\Gal(L/K)$ such that $\psi'\simeq \varphi \psi$. Let $\beta_{1,\p},\beta_{2,\p}$ be the eigenvalues of $\theta(\Frob_\p)$ and $\gamma_{1,\p},\gamma_{2,\p}$ the eigenvalues of $\varphi\otimes\theta'(\Frob_\p)$. By \cite[Lem. 4.7]{Fit22}, there exists a density $1$ subset $\Sigma$ of $\Sigma_K$ of primes of good reduction for $A$ and $A'$ such that for every $\p\in \Sigma$ the quotient $\psi(\Frob_{\p})/\overline\psi(\Frob_{\p})$ is not a root of unity, and there exist $\epsilon_{j,\p},\delta_{j,\p}\in \{ \pm 1\}$ such that
$$
\prod_{j=1}^2(1- \psi(\Frob_{\p})\beta_{j,\p}T)(1- \overline\psi(\Frob_{\p})\overline\beta_{j,\p}T)=\prod_{j=1}^2(1-\epsilon_{j,\p} \psi(\Frob_{\p})\gamma_{j,\p}T)(1-\delta_{j,\p} \overline \psi(\Frob_{\p})\overline\gamma_{j,\p}T)\,.
$$  
Hence for every $\p\in \Sigma$ we have 
$$
\prod_{j=1}^2(1- \beta_{j,\p}T)=\prod_{j=1}^2(1-\epsilon_{j,\p} \gamma_{j,\p}T)\quad\text{and}\quad 
\prod_{j=1}^2(1- \overline\beta_{j,\p}T)=\prod_{j=1}^2(1-\delta_{j,\p} \overline\gamma_{j,\p}T)\,.
$$
By the Chebotarev density theorem, each of the above equations implies that $\theta$ and $\varphi\otimes \theta'$ are locally polyquadratic twists and we conclude as in the previous case.

\subsection{CM not over $K$}
Given a representation $\varrho$ of $G_L$ for a finite extension $L/K$, we write $\Ind^K_L(\varrho)$ to denote the induction of $\varrho$ from $G_L$ to $G_{K}$. In this section, to shorten the notation, we omit the subindex and superindex fields in $\Ind_{KM}^K(\varrho)$. The last statement of \cite[Thm. 4.5 (ii)]{Fit22} implies that there exist Artin representations $\theta,\theta'$ of $G_{KM}$, an $\ell$-adic character $\psi$ of~$G_{KM}$, and a character $\varphi$ of $\Gal(L/KM)$ satisfying 
$$
\varrho_{A,\ell}\simeq \Ind(\theta\otimes \psi)\,,\qquad \varrho_{A',\ell}\simeq \Ind(\varphi \otimes \theta'\otimes \psi)\,.
$$
Moreover, by the construction of $\theta$ and $\psi$ (see \cite[\S2]{FG22} and especifically \cite[Lem. 2.10]{FG22}), one has that if $\xi$ is a constituent of~$\theta$, then 
\begin{equation}\label{equation: compconjprop}
\Ind(\xi\otimes \psi)|_L\simeq \xi\otimes \psi \oplus \overline \xi\otimes \overline \psi\,.
\end{equation}
By the previous case, one of the following holds:
\begin{enumerate}[i)]
\item There exists a quadratic character $\chi$ of $\Gal(L/KM)$ such that $\theta' \otimes \varphi \simeq \chi \otimes \theta$. 
\item There exist characters $\xi_1,\xi_2$ and quadratic characters $\chi_1,\chi_2$ of $\Gal(L/KM)$ such that 
$$
\theta\simeq \xi_1 \oplus \xi_2\,,\qquad \theta' \otimes \varphi \simeq \chi_1 \xi_1 \oplus \chi_2 \xi_2\,.
$$
\end{enumerate}

We first consider case ii). Then, we have
$$
\varrho_{A,\ell}\simeq \Ind(\xi_1 \psi)\oplus\Ind(\xi_2 \psi)\,,\qquad \varrho_{A',\ell}\simeq \Ind(\chi_1\xi_1 \psi)\oplus \Ind(\chi_2\xi_2 \psi)\,.
$$
Observe that $\Ind(\xi_i \psi)$ and $\Ind(\chi_i\xi_i \psi)$ are locally quadratic twists for $i=1,2$ (this can be seen by comparing traces on an element $s\in G_K$: if $s\in G_L$, then use \eqref{equation: compconjprop}, and if $s\not\in G_L$, then notice that both traces are $0$). By Theorem~\ref{thm: Ramakrishnan}, they are quadratic twists, which finishes the proof in this case.

In case i), we claim that $A$ and $A'$ are locally quadratic twists, which is enough for our purposes in virtue of Theorem \ref{theorem: Fite}. Let $\p \in \Sigma_K$ be a prime of good reduction for $A$ and $A'$ of absolute residue degree $1$. If $\p$ is split in $KM$, it follows from \eqref{equation: ladicCM} that the reductions $A_\p$ and $A'_\p$ are quadratic twists. If $\p$ is inert in $KM$, then the same conclusion is attained by using the lemma below.

\begin{lem}\label{lemma: inert primes}
Let $A$ and $A'$ be abelian surfaces $\Qbar$-isogenous to the square of an elliptic curve with CM, say by an imaginary quadratic field $M$. Then, for every $\p \in \Sigma_K$ of good reduction for $A$ and $A'$, inert in $KM$, and of absolute residue degree 1, the reductions $A_\p$ and $A'_\p$ are isogenous if and only if they are polyquadratic twists.
\end{lem}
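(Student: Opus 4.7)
The ``only if'' implication is immediate from the definitions. For the converse, I plan to exploit the fact that any polyquadratic extension of the residue field $\F_p$ is either trivial or equal to $\F_{p^2}$, so the hypothesis that $A_\p$ and $A'_\p$ are polyquadratic twists splits into two cases: either they are already isogenous over $\F_p$ (and there is nothing to prove), or $A'_\p$ is the nontrivial quadratic twist of $A_\p$. In the latter case it suffices to show that $A_\p$ is isogenous over $\F_p$ to its own nontrivial quadratic twist. By \cite[Thm.\ 1]{Tat66} this is equivalent to proving that the characteristic polynomial $f_{A_\p}(T):=\det(T\cdot\Id-\varrho_{A,\ell}(\Frob_\p))$ is invariant under $T\mapsto -T$, i.e., that it lies in $\Z[T^2]$.

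The plan is to deduce this symmetry from the induced structure $\varrho_{A,\ell}\simeq \Ind_{G_{KM}}^{G_K}(\theta\otimes\psi)$ established at the start of this subsection; the inertness of $\p$ in $KM$ ensures in particular that $M\not\subseteq K$, so this CM-not-over-$K$ decomposition applies. The underlying space $V$ of the induced representation carries a natural $\Z/2\Z$-grading $V=V_0\oplus V_1$ indexed by the two cosets of $G_{KM}$ in $G_K$. Since $\p$ is inert in $KM$, any Frobenius element $\Frob_\p$ at $\p$ lies outside $G_{KM}$, and hence $\varrho_{A,\ell}(\Frob_\p)$ swaps $V_0$ and $V_1$.

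Let $\tau\in \End(V)$ be the grading involution acting as $+1$ on $V_0$ and $-1$ on $V_1$. The swap property yields the anticommutation relation $\tau\,\varrho_{A,\ell}(\Frob_\p)=-\varrho_{A,\ell}(\Frob_\p)\,\tau$, so $\tau$ identifies the $\lambda$-eigenspace of $\varrho_{A,\ell}(\Frob_\p)$ with its $(-\lambda)$-eigenspace. The eigenvalues of $\varrho_{A,\ell}(\Frob_\p)$ therefore occur in pairs $\pm\lambda$ of equal multiplicity, whence $f_{A_\p}(T)\in \Z[T^2]$. Since $A'_\p$ being the nontrivial quadratic twist of $A_\p$ forces $f_{A'_\p}(T)=f_{A_\p}(-T)$, we conclude $f_{A'_\p}=f_{A_\p}$ and, by Tate's theorem, $A_\p\sim A'_\p$ over $\F_p$. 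The main substantive input is the induced-representation description of $\varrho_{A,\ell}$ supplied by \cite{FG22}; once it is in place, the eigenvalue-pairing argument is formal and I do not anticipate further obstacles.
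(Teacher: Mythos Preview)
Your anticommutation argument showing $f_{A_\p}(T)\in\Z[T^2]$ is correct and elegant, but the dichotomy you set up at the beginning is not justified: for abelian \emph{surfaces} over $\F_p$, being polyquadratic twists (i.e.\ isogenous over $\F_{p^2}$) does \emph{not} reduce to ``isogenous over $\F_p$'' or ``$A'_\p\sim(A_\p)_\chi$''. A general counterexample is $A_\p=E_1\times E_2$ and $A'_\p=E_1\times (E_2)_\chi$ with $E_1,(E_1)_\chi,E_2,(E_2)_\chi$ pairwise non-isogenous; these become isogenous over $\F_{p^2}$, yet $A'_\p$ is isogenous neither to $A_\p$ nor to $(A_\p)_\chi=(E_1)_\chi\times(E_2)_\chi$. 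So the step ``$A'_\p$ being the nontrivial quadratic twist of $A_\p$ forces $f_{A'_\p}(T)=f_{A_\p}(-T)$'' rests on an unproven premise. (In the supersingular setting of the lemma such counterexamples in fact do not occur, but you have not invoked supersingularity, and establishing this would already amount to proving the lemma.)

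The repair is immediate: run the identical induced-representation argument for $A'$, which the hypotheses equally license, to obtain $f_{A'_\p}(T)\in\Z[T^2]$ as well. Writing $f_{A_\p}(T)=g(T^2)$ and $f_{A'_\p}(T)=h(T^2)$ with $g,h$ monic of degree~$2$, the polyquadratic-twist condition says the characteristic polynomials of $\Frob_\p^2$ agree, i.e.\ $g(T)^2=h(T)^2$; hence $g=h$ and $f_{A_\p}=f_{A'_\p}$, and Tate's theorem finishes.

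For comparison, the paper's proof does not touch the $\Ind$-decomposition at all. It argues instead that $\p$ is of supersingular reduction for $A$ and $A'$, and combines this with the Weil bounds to confine the normalized polynomial $L_\p(A,p^{-1/2}T)$ to an explicit five-element set $S\subset\Z[T^2]$; a direct check then shows that $P(T)\mapsto\Res_Z(P(Z),Z^2-T)$ is injective on $S$. Your (corrected) route avoids the enumeration and makes the symmetry $f_{A_\p}(T)=f_{A_\p}(-T)$ transparent from the Galois-theoretic structure, at the cost of importing the tensor decomposition of \cite{FG22}; the paper's argument for the lemma is by contrast elementary and self-contained once supersingularity is granted.
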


\begin{proof}
Let $\p\in \Sigma_K$ be as in the statement, and let $p$ denote its absolute norm. 
Define the set of polynomials
$$
S:=\{(1-T^2)^2\,, 1-T^2+T^4\,,1+T^4\,,  1+T^2+T^4\,,  (1+T^2)^2\}\,.
$$
The fact that such a $\p$ is of supersingular reduction for $A$, together with the Weil bounds, implies that $L_\p(A,p^{-1/2}T)\in S$. We similarly have $L_\p(A',p^{-1/2}T)\in S$. Consider the set
$$
R:=\{(1-T)^4\,,(1-T+T^2)^2\,,(1-T^2)^2\,, (1+T+T^2)^2\,, (1+T)^ 4  \}\,,
$$
and observe that the map $\Phi:S\rightarrow R$ defined by 
$P(T) \mapsto \Res_Z(P(Z),Z^ 2-T)$ is a bijection.
Notice that $A_\p$ and $A'_\p$ are polyquadratic twists if and only if the polynomials $L_\p(A,p^{-1/2}T)$ and $L_\p(A',p^{-1/2}T)$ have the same image under $\Phi$. Since $\Phi$ is a bijection, this is equivalent to their equality, and we conclude by \cite[Thm. 1]{Tat66}.
\end{proof}

\section{Proof of Theorem \ref{theorem: main}}\label{section: proof}

In this section we complete the proof of Theorem \ref{theorem: main}. 

\subsection{Background results.} For an abelian surface $A$ defined over $K$, denote by $\End^0(A)$ the endomorphism algebra $\End(A)\otimes \Q$, and by $\Xc(A)$ the \emph{absolute type} of $A$, that is, the $\R$-algebra $\End(A_\Qbar)\otimes \R$. We borrow from \cite[\S4.1]{FKRS12} the labels $\Ab,\dots,\Fb$ for the different possibilities for $\Xc(A)$. These are in bijection with the possibilities for the connected component of the identity of the \emph{Sato--Tate group} of $A$, denoted $\ST(A)$. This is a closed real Lie subgroup of $\USp(4)$, only defined up to conjugacy. It captures important arithmetic information of $A$ and it is conjectured to predict the limiting distribution of the Frobenius elements attached to~$A$. See \cite[\S2]{BK15} or \cite[\S2]{FKRS12} for its definition in our context; see \cite[Chap. 8]{Ser12} for a conditional definition in a more general context. We will use the notation settled in \cite[\S3]{FKRS12} for Sato--Tate groups of abelian surfaces.

It follows from Theorem \ref{theorem: Fite} (as explained in \cite[Rem.\ 2.10]{Fit22}) that two abelian surfaces which are locally quadratic twists share the same Sato--Tate group. There are however examples of abelian surfaces that are locally polyquadratic twists with distinct Sato--Tate groups. Indeed, let $E$ be an elliptic curve defined over $K$ without CM and $\chi$ a nontrivial quadratic character of $G_K$. Let $E_\chi$ denote the twist of $E$ by $\chi$. Then $E\times E$ has Sato--Tate group~$\stgroup[E_1]{1.4.E.2.1a}$, while $E\times E_\chi$ has Sato--Tate group $\stgroup[J(E_1)]{1.4.E.2.1b}$. Despite the existence of these examples, the Sato-Tate group is preserved under locally polyquadratic twist in most of the cases. This is essentially the first step in the proof of Theorem \ref{theorem: main} in the cases considered in this section.

We will repeatedly use the following well known lemma, whose proof may be found split between the proofs of \cite[Cor.\ 2.5]{Fit22} and \cite[Cor.\ 2.7]{Fit22}.

\begin{lem}\label{lemma: trivialends}
Let $\varrho$ and $\varrho'$ be strongly absolutely irreducible representations of $G_K$. If there exists a finite extension $L/K$ such that $\varrho|_L\simeq \varrho'|_L$, then there exists a finite order character $\chi$ of $G_K$ such that $\varrho'\simeq \chi \otimes \varrho$. In particular, if $B$ and $B'$ are abelian varieties defined over $K$ such that $B_\Qbar$ and $B'_\Qbar$ are isogenous and such that $\Xc(B)\simeq \Xc(B')\simeq \R$, then $B$ and $B'$ are quadratic twists. 
\end{lem}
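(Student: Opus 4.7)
The plan is to prove the first assertion by exploiting the rigidity that strong absolute irreducibility provides, and then to deduce the assertion about abelian varieties from it. After passing to the Galois closure of $L$ in $\Qbar$, I may assume $L/K$ is Galois with $\Gamma:=\Gal(L/K)$. Viewing $W:=\Hom(\varrho,\varrho')\simeq \varrho^\vee\otimes\varrho'$ as a representation of $G_K$, the subspace $W^{G_L}=\Hom_{G_L}(\varrho|_L,\varrho'|_L)$ is one-dimensional over $\overline E$: it is nonzero by hypothesis, and at most one-dimensional by Schur's lemma since $\varrho|_L$ and $\varrho'|_L$ are absolutely irreducible. Because $G_L$ is normal in $G_K$, this line is $G_K$-stable, and $G_K$ acts on it through a character $\chi$ that factors through the finite group $\Gamma$. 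Unwinding the adjoint $G_K$-action on $W$ shows that any nonzero vector in this line is precisely a $G_K$-equivariant isomorphism $\chi\otimes\varrho\xrightarrow{\sim}\varrho'$, yielding the desired finite-order character.

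For the application to abelian varieties, the first step is to verify strong absolute irreducibility of $\varrho_{B,\ell}$. The assumption $\Xc(B)\simeq \R$ forces $\End^0(B_\Qbar)=\Q$, and since $\End(B_L)$ embeds into $\End(B_\Qbar)=\Z$ for every finite extension $L/K$, we also have $\End^0(B_L)=\Q$. Faltings' isogeny theorem then gives $\End_{G_L}(V_\ell(B))=\Q_\ell$, and combined with the semisimplicity of $V_\ell(B)$ (again due to Faltings) this forces $\varrho_{B,\ell}|_L$ to be absolutely irreducible for every finite $L/K$; the same reasoning applies to $\varrho_{B',\ell}$. Using Faltings once more on the geometric isogeny $B_\Qbar\sim B'_\Qbar$ yields $\varrho_{B,\ell}|_L\simeq \varrho_{B',\ell}|_L$ for some finite $L/K$, so the first assertion produces a finite-order character $\chi$ with $\varrho_{B',\ell}\simeq \chi\otimes \varrho_{B,\ell}$.

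Upgrading $\chi$ to a quadratic character is the main obstacle, since the representation-theoretic argument only controls $\chi$ up to finite order. I would instead argue geometrically: because $B_\Qbar\sim B'_\Qbar$ with $\End^0(B_\Qbar)=\Q$, the space $\Hom(B_\Qbar,B'_\Qbar)\otimes \Q$ is one-dimensional over $\Q$; fix a $\Qbar$-isogeny $f$ spanning it. For each $\sigma\in G_K$ we have $\sigma(f)=c(\sigma)\,f$ for a unique $c(\sigma)\in \Q^\times$, and since $f$ descends to some finite extension of $K$ and $G_K$ acts trivially on $\Q$, the map $c\colon G_K\to \Q^\times$ is a continuous homomorphism with finite image. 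The only finite subgroup of $\Q^\times$ is $\{\pm 1\}$, so $c$ is a quadratic character, exhibiting $B'$ as the quadratic twist $B_c$ of $B$.
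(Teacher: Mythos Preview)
Your proof is correct. The paper does not prove this lemma in-text but refers to \cite[Cor.~2.5, Cor.~2.7]{Fit22}, so there is no in-paper argument to compare against directly; your approach is the standard one and is essentially what one expects those references to contain. The first assertion is the usual Schur's-lemma argument on the $G_K$-stable line $\Hom_{G_L}(\varrho,\varrho')$, and the deduction of strong absolute irreducibility of $\varrho_{B,\ell}$ from $\End^0(B_\Qbar)=\Q$ via Faltings is routine. Your third paragraph---forcing the character to be quadratic by noting that the Galois action on the $\Q$-line $\Hom^0(B_\Qbar,B'_\Qbar)$ is a homomorphism into $(\Q^\times)_{\mathrm{tors}}=\{\pm 1\}$---is clean and self-contained; by Faltings this $c$ coincides with the $\chi$ produced in your second paragraph (both describe the $G_K$-action on $\Hom_{G_L}(V_\ell(B),V_\ell(B'))\simeq\Hom^0(B_L,B'_L)\otimes\Q_\ell$), so that paragraph is in fact not needed for the ``In particular'' clause, though of course it is required for the first assertion of the lemma. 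One cosmetic remark: $W^{G_L}$ is already one-dimensional over $E$ itself, since taking $G_L$-invariants commutes with the flat base change $E\hookrightarrow\overline E$.
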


\subsection{Proof strategy.} Until the end of \S\ref{section: proof}, let $A$ and $A'$ be abelian surfaces that are locally polyquadratic twists. By Theorem~\ref{theorem: KhareLarsen},~$A$ and~$A'$ are geometrically isogenous. In particular, they have the same absolute type, and the proof is carried out by cases depending on each absolute type.

Before delving into the details of the proof for each case, we make some general remarks. As we did in \S\ref{section: squares}, for those absolute types for which the a priori weaker condition of being locally polyquadratic twists implies being locally quadratic twists, we prove so, and then we conclude by applying Theorem \ref{theorem: Fite}. In \S\ref{section: bcases} we treat the absolute types for which the proof is straightforward with the results obtained so far. Absolute types $\Db$ and $\Bb$ are respectively considered in \S\ref{section: D} and \S\ref{section: B}. 

The absolute type $\Db$ divides into the geometrically irreducible and the reducible case. The latter is handled with ideas similar to those used in \S\ref{section: bcases}; both for the geometrically irreducible case of type $\Db$ and type $\Bb$, the proof follows a similar pattern. The first step is to show that $A$ and $A'$ have the same Sato--Tate group. This follows quite directly from results by Shimura in the geometrically irreducible case of type $\Db$ and from Lemma \ref{lemma: same subcase} for type $\Bb$. One then shows the validity of the local-global principle in the case that the (common) Sato--Tate group is connected. The final step consists on retrieving the general case from the connected case.

\subsection{Absolute types $\Eb$, $\Fb, \Ab, \Cb$.}\label{section: bcases}

If $A$ has absolute type $\Eb$ or $\Fb$, namely, $\Xc(A)\simeq \M_2(\R)$ or $\M_2(\C)$,
then Theorem \ref{theorem: main} was proven in \S\ref{section: squares}.
If $A$ has absolute type $\Ab$, equivalently, $\Xc(A)\simeq \R$, then~$A$ and~$A'$ are quadratic twists by Lemma \ref{lemma: trivialends}. Now suppose that $A$ has absolute type $\Cb$. By Lemma \cite[Lem.\ 4.13]{Fit22}, $A$ is isogenous to the product of an elliptic curve $E_1$ without CM and an elliptic curve $E_2$ with CM. We similarly define~$E_1'$ and~$E_2'$ for~$A'$.
By Theorem \ref{theorem: KhareLarsen},~$E_{1}$ and~$E'_{1}$ (respectively, $E_{2}$ and $E'_{2}$) are geometrically isogenous, and hence, by Lemma \ref{lemma: trivialends}, $E_1$ and~$E_1'$ are quadratic twists. Then Remark \ref{remark: factors} implies that $E_2$ and $E_2'$ are locally polyquadratic twists, and hence quadratic twists by Lemma \ref{lemma: dimension1}. Thus $A$ and $A'$ are polyquadratic twists.

\subsection{Absolute type $\Db$}\label{section: D}

Suppose that $A$ has absolute type $\Db$, namely, $\Xc(A)\simeq \C\times \C$. Then precisely one of the following two conditions hold:
\begin{enumerate}
\item[D.1)] $A$ is isogenous to the product of two nonisogenous elliptic curves $E_1$ and $E_2$ with CM; 
\item[D.2)] $A_\Qbar$ has CM by a quartic CM field.
\end{enumerate}
Suppose that $A$ falls in case D.1. By \cite[Lem.\ 4.13]{Fit22} and Theorem \ref{theorem: KhareLarsen}, there exist elliptic curves $E'_1$ and $E'_2$ such that $A'$ is isogenous to $E'_1\times E'_2$. After possibly reordering $E_1$ and~$E_2$, and using \cite[Lem.\ 4.14]{Fit22}, we may assume that $E_1$ and $E_1'$ are locally polyquadratic twists. Hence they are quadratic twists by Lemma \ref{lemma: dimension1}. As the same holds for $E_2$ and $E_2'$, we conclude that $A$ and $A'$ are polyquadratic twists.
 
Now suppose that $A$ falls in case D.2. Denote the CM field by $M$ and its reflex field by $M^*$.  As explained in \cite[\S5.1.1]{Fit22} (see the references therein), precisely one of the following three cases occurs:
\begin{enumerate}[i)]
\item $M^*\subseteq K$ and $\End^0(A) \simeq M$;
\item $KM^*/K$ is quadratic and $\End^0(A)$ is a real quadratic field;
\item $KM^*/K$ is cyclic of degree $4$ and $\End^0(A)\simeq \Q$. 
\end{enumerate}
Moreover, which case occurs depends on whether $A$ has Sato--Tate group $\stgroup[F]{1.4.D.1.1a}$, $\stgroup[F_{ab}]{1.4.D.2.1a}$ or $\stgroup[F_{ac}]{1.4.D.4.1a}$, respectively. 
Notice that, as it is clear from the case distinction, $A$ and $A'$ fall in the same case, and in particular have the same Sato--Tate group.
 
Suppose that $A$ and $A'$ fall in case i). We claim that there exists a rational prime $\ell$ inert in~$M$. This follows from the fact that either $M/\Q$ is cyclic or it is not normal with normal closure the dihedral group of 8 elements, as shown in \cite[p. 65]{Shi98} (our claim would not hold if $\Gal(M/\Q)$ were allowed to have order 2). So let $\ell$ be a prime inert in~$M$, let $\lambda$ be the prime of~$M$ lying over $\ell$, and let $M_\lambda$ denote the completion of $M$ at $\lambda$. Then $V_\ell(A)$ is an $M_\lambda$-module of dimension $1$, and hence strongly absolutely irreducible. Therefore there exists a character $\varphi$ of $G_K$ such that $\varrho_{A',\ell}\simeq \varphi \otimes \varrho_{A,\ell}$. Since in this case the Sato--Tate group is $\stgroup[F]{1.4.D.1.1a}$, we can apply \cite[Prop.\ 2.11]{Fit22}, which implies that $A$ and $A'$ are quadratic twists. 

Now suppose that $A$ and $A'$ fall in case ii) or iii). We choose a prime $\ell$ totally split in $M$, so that the four distinct embeddings $\lambda_i\colon M \hookrightarrow \Qbar_\ell$, for $i=1,\dots,4$, take values in $\Q_\ell$.  
Define 
$$
V_{\lambda_i}(A):=V_\ell(A)\otimes_{M\otimes \Q_\ell,\lambda_i}\Q_\ell\,,
$$
where $\Q_\ell$ is being regarded as an $M\otimes \Q_\ell$-module via~$\lambda_i$. It is a 1-dimensional vector space over $\Q_\ell$. We equip it with an action of $G_K$ by letting this group act naturally on $V_\ell(A)$ and trivially on $\Q_\ell$. 

Let $n=[\End^0(A):\Q]$. By \cite[(5.2)]{Fit22}, we have an isomorphism
\begin{equation}\label{equation: Tate module as induction}
V_\ell(A)\simeq \bigoplus_{i=1}^n \Ind^K_{KM^*}(V_{\lambda_i})\,,
\end{equation}
where we assume that $\lambda_1,\dots,\lambda_4$ have been ordered so that the restrictions of $\lambda_1,\dots,\lambda_n$ to $\End^0(A)\subseteq M$ are all distinct. 

Let $\p\in \Sigma_K$ be a prime of good reduction for $A$ and $A'$. If $\p$ is totally split in $KM^*$, then $A_\p$ and~$A_\p'$ are quadratic twists by case i). If $\p$ is not totally split in $KM^*$, then \eqref{equation: Tate module as induction} implies that $\Tr(\varrho_{A,\ell}(\Frob_\p))=\Tr(\varrho_{A',\ell}(\Frob_\p))=0$. Hence we can apply Lemma \ref{lemma: handy} which implies that~$A_\p$ and~$A_\p'$ are quadratic twists. It follows that $\varrho_{A,\ell}$ and $\varrho_{A',\ell}$ are locally quadratic twists, and hence they are quadratic twists by Theorem \ref{theorem: Fite}. 

\subsection{Absolute type $\Bb$}\label{section: B}
Suppose that $A$ has absolute type $\Bb$, which means that $\Xc(A)\simeq \R\times \R$. Then, as explained in \cite[\S5.1.2]{Fit22}, there exists a prime $\ell$ so that precisely one of the following two conditions holds:
\begin{enumerate}[(i)]
\item[B.1)] $\varrho_{A,\ell}\simeq \varrho_1 \oplus \varrho_2$, where $\varrho_1,\varrho_2:G_K\rightarrow \GL_2(\Q_\ell)$ are strongly absolutely irreducible $\ell$-adic representations that do not become isomorphic after restriction to $G_L$ for any finite extension $L/K$;
\item[B.2)] $\varrho_{A,\ell}\simeq \Ind^L_K(\varrho)$, where $L/K$ is quadratic and $\varrho:G_L\rightarrow \GL_2(\Q_\ell)$ is a strongly absolutely irreducible $\ell$-adic representation.
\end{enumerate}

Whether B.1 or B.2 holds depends on whether $\ST(A)$ is isomorphic to $\stgroup[\SU(2)\times \SU(2)]{1.4.B.1.1a}$ or $\stgroup[N(\SU(2)\times \SU(2))]{1.4.B.2.1a}$.

\begin{lem}\label{lemma: same subcase}
If $A$ falls in case B.1 (respectively, B.2), then so does $A'$.
\end{lem}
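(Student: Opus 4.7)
The plan is to argue by contradiction: since the locally polyquadratic twist relation is symmetric in $A$ and $A'$, it suffices to rule out the possibility that $A$ falls in B.1 while $A'$ falls in B.2.

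First, I would extract geometric isogeny from the hypothesis. As observed in the proof of Proposition~\ref{reducing-to-rho}, $A$ and $A'$ being locally polyquadratic twists implies that for almost every $\p \in \Sigma_K$ the reductions $A_\p$ and $A'_\p$ become isogenous over a quadratic extension of the residue field, and therefore $\overline A_\p$ and $\overline A'_\p$ are isogenous. Theorem~\ref{theorem: KhareLarsen} then yields an isogeny $A_\Qbar \sim A'_\Qbar$.

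Next, I would unpack the two subcases via Faltings's theorem. In B.1, the reducibility $\varrho_{A,\ell} \simeq \varrho_1 \oplus \varrho_2$ into two strongly absolutely irreducible two-dimensional constituents that do not become isomorphic over any finite extension produces, via Faltings and Poincaré reducibility, a $K$-isogeny $A \sim E_1 \times E_2$ with $E_1, E_2$ elliptic curves defined over $K$ such that $E_{1,\Qbar} \not\sim E_{2,\Qbar}$. In B.2, the isomorphism $\varrho_{A',\ell} \simeq \Ind_L^K \varrho$ forces, after restricting to $G_L$ and applying Faltings, a $K$-isogeny $A' \sim \Res_{L/K} E'$ for some elliptic curve $E'/L$; the requirement that the absolute type is $\Bb$ (and not $\Eb$) further gives $E'_\Qbar \not\sim (E')^\sigma_\Qbar$, where $\sigma$ generates $\Gal(L/K)$.

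The contradiction then comes from comparing $G_K$-actions on the two-element multisets of geometric isogeny classes of simple factors of $A_\Qbar \sim A'_\Qbar$. On the one hand, $\{E_{1,\Qbar}, E_{2,\Qbar}\}$ is pointwise fixed by $G_K$, since both elliptic curves are already defined over $K$. On the other hand, $A'_\Qbar \sim E'_\Qbar \times (E')^\sigma_\Qbar$ with the two factors $\Qbar$-inequivalent and swapped by any lift of $\sigma$; so the multiset $\{E'_\Qbar, (E')^\sigma_\Qbar\}$ carries a non-trivial $G_K$-action. The isogeny $A_\Qbar \sim A'_\Qbar$ forces these two multisets of isogeny classes to coincide, a contradiction. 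The main obstacle is ensuring that the geometric decompositions in B.1 and B.2 really descend to simple factors whose fields of definition can be tracked Galois-theoretically, and this is exactly what Faltings's isogeny theorem provides when applied at the appropriate levels.
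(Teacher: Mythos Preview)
Your argument has a genuine gap: it overlooks the real multiplication (RM) case of absolute type $\Bb$. Recall that $\Xc(A)\simeq\R\times\R$ occurs not only when $A_\Qbar$ splits as a product of two non-isogenous non-CM elliptic curves, but also when $A_\Qbar$ is simple with $\End^0(A_\Qbar)$ a real quadratic field $F$. In this latter situation the decomposition $\varrho_{A,\ell}\simeq\varrho_1\oplus\varrho_2$ in B.1 arises from choosing $\ell$ split in $F$ and setting $\varrho_i=V_{\lambda_i}(A)$; Faltings's theorem then yields $\End^0_K(A)\otimes\Q_\ell\simeq\Q_\ell\times\Q_\ell$, but $\End^0_K(A)$ itself is the field $F$, not $\Q\times\Q$, and $A$ remains simple over $K$. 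So the inference ``$\varrho_{A,\ell}$ reducible $\Rightarrow A\sim E_1\times E_2$ over $K$'' is false here, and likewise $A'$ in B.2 need not be a Weil restriction of an elliptic curve. Consequently your ``multiset of geometric simple factors'' has a single element in the RM case and carries no useful $G_K$-action.

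More fundamentally, your argument uses the locally polyquadratic hypothesis only to deduce that $A_\Qbar\sim A'_\Qbar$, and this is not enough: in the RM case one can have $A$ and $A'$ geometrically isogenous with $A$ in B.1 (RM defined over $K$) and $A'$ in B.2 (RM defined only over a quadratic extension), since the $G_K$-action on $\End^0(A_\Qbar)\simeq\End^0(A'_\Qbar)$ depends on the $K$-structure, not just on the geometric isogeny class. The paper instead exploits the full force of the hypothesis via Lemma~\ref{lemma: gpcharac}: the virtual representation $\Sym^2\varrho_{A,\ell}-\wedge^2\varrho_{A,\ell}$ is the same for $A$ and $A'$, hence so is the multiplicity of $\chi_\ell$ in it; translating to Sato--Tate groups and reading off \cite[Table~8]{FKRS12} gives $-2$ in case B.1 and $-1$ in case B.2, a contradiction. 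Your geometric approach is fine in the product-of-elliptic-curves subcase but cannot replace this representation-theoretic invariant in general.
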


\begin{proof}
Let $\varrho$ be the tautological representation of $\ST(A)$, and similarly define $\varrho'$ for $\ST(A')$. Let $\theta$ denote $\Sym^2 \varrho - \wedge^ 2\varrho$ and let $\theta_\ell$ denote $\Sym^2 \varrho_{A,\ell} - \wedge^ 2\varrho_{A,\ell}$. Define similarly $\theta'$ and $\theta_\ell'$. The same argument used in the proof of \cite[Thm.\ 1]{Saw16} shows that the virtual multiplicity of the trivial representation in $\theta$ coincides with the virtual multiplicity of the $\ell$-adic cyclotomic character $\chi_\ell$ in $\theta_\ell$.
By Lemma~\ref{lemma: gpcharac}, we have
$$
\langle \triv, \theta\rangle = \langle \chi_\ell,\theta_\ell\rangle=\langle \chi_\ell,\theta_\ell'\rangle=\langle \triv,\theta'\rangle\,.
$$
Suppose that the statement of the lemma were false. Without loss of generality we may assume that $A$ falls in case B.1 and $A'$ in case B.2. However, \cite[Table 8]{FKRS12} yields
$$
\langle \triv, \theta\rangle=\langle \triv,\varrho\otimes \varrho\rangle -2 \langle \triv,\wedge^ 2\varrho\rangle=-2\,,\quad 
\langle \triv, \theta'\rangle=\langle \triv,\varrho'\otimes \varrho'\rangle -2 \langle \triv,\wedge^ 2\varrho'\rangle=-1\,,
$$
which gives a contradiction.
\end{proof}

\begin{lem}\label{lemma: ladic constituents}
If $A$ falls in case B.1, then $\wedge^2 \varrho_{A,\ell}$ has no $1$-dimensional constituent of the form $\psi\chi_\ell$, where $\psi$ is a nontrivial finite order character and $\chi_\ell$ is the $\ell$-adic cyclotomic character. 
\end{lem}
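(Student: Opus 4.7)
The plan is to decompose $\wedge^2 \varrho_{A,\ell}$ into its three natural summands and rule out a $1$-dimensional constituent of the prescribed form in each. With $\varrho_{A,\ell}\simeq \varrho_1\oplus \varrho_2$ as in case B.1, we have
$$
\wedge^2 \varrho_{A,\ell}\simeq \wedge^2 \varrho_1 \oplus \wedge^2 \varrho_2 \oplus (\varrho_1\otimes \varrho_2)\,.
$$

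The first step is to compute $\det \varrho_i$ for $i=1,2$. The strong absolute irreducibility of $\varrho_1,\varrho_2$ and the fact that they do not become isomorphic over any finite extension of $K$, combined with Faltings' isogeny theorem (which identifies sub-$G_K$-representations of $V_\ell(A)$ with $K$-isogeny factors of $A$), imply that $A$ is $K$-isogenous to a product $E_1\times E_2$ of elliptic curves over $K$ with $V_\ell(E_i)\simeq \varrho_i$. The Weil pairing then yields $\wedge^2 \varrho_i = \det \varrho_i = \chi_\ell$, so neither $\wedge^2 \varrho_1$ nor $\wedge^2 \varrho_2$ can contribute a constituent of the form $\psi\chi_\ell$ with $\psi$ nontrivial.

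The second step is to analyse the four-dimensional summand $\varrho_1\otimes \varrho_2$. Using the self-duality $\varrho_2^\vee \simeq \chi_\ell^{-1}\otimes \varrho_2$ just established, for any character $\psi$ of $G_K$ we have
$$
\Hom_{G_K}(\psi \chi_\ell,\, \varrho_1\otimes \varrho_2)\simeq \Hom_{G_K}(\psi \otimes \varrho_2,\, \varrho_1)\,.
$$
Since both $\psi\otimes \varrho_2$ and $\varrho_1$ are absolutely irreducible of dimension two, Schur's lemma shows that the right-hand side is nonzero if and only if $\varrho_1\simeq \psi\otimes \varrho_2$. If such a nontrivial finite-order $\psi$ existed, then on the fixed field $L$ of $\ker \psi$ we would have $\varrho_1|_L\simeq \varrho_2|_L$, contradicting the defining property of case B.1.

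The main (mild) obstacle is justifying the normalization $\det \varrho_i=\chi_\ell$: without it one would only know each $\det \varrho_i$ to be a finite-order twist of $\chi_\ell$, and the analysis of $\varrho_1\otimes \varrho_2$ would have to absorb such twists. Invoking Faltings' theorem to identify the $\varrho_i$ with Tate modules of genuine elliptic curves over $K$ cleanly sidesteps this issue, after which the rest of the argument is essentially formal.
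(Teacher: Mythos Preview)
Your overall strategy matches the paper's exactly: decompose $\wedge^2\varrho_{A,\ell}$, establish $\det\varrho_i=\chi_\ell$, and then rule out $\psi\chi_\ell$ as a constituent of $\varrho_1\otimes\varrho_2$ via Schur and the B.1 hypothesis. Your second step is essentially the paper's argument.

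The gap is in your first step. Faltings' theorem is full faithfulness of $A\mapsto V_\ell(A)$; it does not say that every $G_K$-stable summand of $V_\ell(A)$ arises as $V_\ell$ of an abelian subvariety. From $\varrho_{A,\ell}\simeq\varrho_1\oplus\varrho_2$ with $\varrho_1\not\simeq\varrho_2$ absolutely irreducible you only get $\End^0(A)\otimes\Q_\ell\simeq\Q_\ell\times\Q_\ell$, so $\End^0(A)$ is $2$-dimensional over $\Q$; but it may be a real quadratic field $F$ rather than $\Q\times\Q$. This genuinely occurs in case B.1: a simple abelian surface with real multiplication by $F$ defined over $K$ has $\ST(A)\simeq\SU(2)\times\SU(2)$, and for $\ell$ split in $F$ the decomposition $V_\ell(A)=V_\lambda\oplus V_{\overline\lambda}$ realizes B.1, yet $A$ has no elliptic isogeny factor over $K$ (the idempotents projecting onto the $\varrho_i$ lie in $F\otimes\Q_\ell$ but not in $F$). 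So your appeal to the Weil pairing on $E_i$ is unavailable in this sub-case. The paper instead cites Ribet \cite[Thm.\ 4.3.1]{Rib76}, which yields $\det\varrho_i=\chi_\ell$ directly for the $\lambda$-adic pieces in the RM setting and of course also covers the product-of-elliptic-curves case.
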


\begin{proof}
Write $\varrho_{A,\ell}$ as $\varrho_1\oplus \varrho_2$, where $\varrho_1,\varrho_2$ are as in B.1. By \cite[Thm.\ 4.3.1]{Rib76}, we have $\det(\varrho_i)=\chi_\ell$ and hence
$$
\wedge^2 \varrho_{A,\ell}\simeq \chi_\ell \oplus (\varrho_1 \otimes \varrho_2) \oplus \chi_\ell\,.
$$
Suppose that $\varrho_1 \otimes \varrho_2$ has a 1-dimensional constituent of the form $\psi\chi_\ell$ and let $L/K$ be the field extension cut out by $\psi$. Then the subspace affording $\psi$ is contained in 
$$
(\varrho_1\otimes \varrho_2\otimes \chi_\ell^ {-1})^{G_L}\simeq (\varrho_1^ \vee\otimes \varrho_2)^{G_L}\simeq \Hom_{G_L}(\varrho_1,\varrho_2)\,.
$$
This contradicts that $\varrho_1$ and $\varrho_2$ are strongly absolutely irreducible and $\varrho_1|_L\not \simeq \varrho_2|_L$. 
\end{proof}

Suppose that $A$ and $A'$ fall in case B.1. Write accordingly $\varrho_{A,\ell}$ as $\varrho_1\oplus \varrho_2$ and $\varrho_{A',\ell}$ as $\varrho_1'\oplus \varrho_2'$.
After possibly reordering $\varrho_1$ and $\varrho_2$, Lemma~\ref{lemma: trivialends} shows that there exist finite order characters~$\chi_1$ and~$\chi_2$ of $G_K$ such that $\varrho_1'\simeq \chi_1\otimes \varrho_1$ and $\varrho_2'\simeq \chi_2\otimes \varrho_2$. Since
$$
\wedge^2 \varrho_{A',\ell}\simeq \chi_1^2\chi_\ell \oplus \chi_1\chi_2 (\varrho_1\otimes \varrho_2) \oplus \chi_2^2\chi_\ell\,,
$$
Lemma~\ref{lemma: ladic constituents} implies that the characters $\chi_1$ and $\chi_2$ must be  quadratic. Hence $A$ and $A'$ are polyquadratic twists.

Now suppose that $A$ and~$A'$ fall in case B.2. Then we write $\varrho_{A,\ell}\simeq \Ind_K^L(\varrho)$ and $\varrho_{A',\ell}\simeq \Ind_K^{L'}(\varrho')$, where $L/K$ and $L'/K$ are quadratic extensions, and $\varrho$ and $\varrho'$ are strongly absolutely irreducible representations. 
Observe that $A_L$ falls in case B.1. Then Lemma \ref{lemma: same subcase} implies that~$A'_L$ also falls in case B.1. This can only occur if $L=L'$.

Let $\tau$ denote an element of $G_K$ projecting onto the nontrivial element of $\Gal(L/K)$. Let $\varrho^\tau$ denote the representation of $G_L$ defined by $\varrho^\tau(s)=\varrho(\tau s \tau)$ for every $s\in G_L$. Then we have 
\begin{equation}\label{equation: overL1}
\varrho_{A,\ell}|_L\simeq \varrho \oplus \varrho^ \tau\,,\qquad \varrho_{A',\ell}|_L\simeq \varrho' \oplus \varrho'^ \tau\,.
\end{equation}
After possibly reordering $\varrho$ and $\varrho^\tau$, by the B.1. case of the proof, there exist quadratic characters $\chi$ and $\psi$ of $G_L$ such that 
\begin{equation}\label{equation: overL2}
\varrho'\simeq \chi \otimes \varrho\,,\qquad \varrho'^\tau\simeq \psi \otimes \varrho^\tau\,.
\end{equation} 
We claim that the characters $\chi$ and $\psi$ coincide. Assuming the claim, the proof of Theorem \ref{theorem: main} takes the following steps. Let $\p\in \Sigma_K$ be a prime of good reduction for $A$ and~$A'$. If $\p$ is split in $L/K$, the isomorphisms \eqref{equation: overL1} and \eqref{equation: overL2} show~ that~$A_\p$ and~$A'_\p$ are quadratic twists. The same holds, by Lemma \ref{lemma: handy}, if $\p$ is inert in $L/K$. 
Then by Theorem~\ref{theorem: Fite}, $A$ and $A'$ are quadratic twists.

We now turn to prove the claim. There are two cases to distinguish: either $A$ is an abelian surface such that $\End^0(A_L)$ is isomorphic to a real quadratic field $F$ or $A_L$ is isogenous to the product $E\times E^\tau$, where $E$ is an elliptic curve defined over $L$ without complex multiplication, and $E^\tau$ is the Galois conjugate of $E$ by $\tau$. 

In the first case, $\ell$ is split in $F$. Let $\overline{\,\cdot\,}$ denote the nontrivial element of $\Gal(F/\Q)$. After possibly renaming the primes $\lambda$ and $\overline \lambda$ of $F$ lying over $\ell$, we have that $\varrho$ (resp. $\varrho^ \tau$) is the $F$-rational representation afforded by 
$$
V_\lambda(A):=V_\ell(A)\otimes_{F\otimes\Q_\ell,\lambda}\Q_\ell\qquad  \left(\text{resp. $V_{\overline \lambda}(A):=V_\ell(A)\otimes_{F\otimes\Q_\ell,\overline\lambda}\Q_\ell$}\right). 
$$
From \cite[Lem. 2.10]{FG22}, we deduce that $\varrho^\tau\simeq \overline \varrho$, where $\overline \varrho$ is the $F$-rational $\ell$-adic representation characterized by the property that, for almost all $\p \in \Sigma_L$, the traces of $\overline\varrho(\Frob_\p)$ and $\varrho(\Frob_\p)$ are Galois conjugates in $F$. Since $\chi$ is quadratic and takes rational values, using the first isomorphism of \eqref{equation: overL2}, we find 
$$
\varrho'^\tau \simeq \overline \varrho'\simeq \chi\otimes \overline \varrho \simeq \chi \otimes \varrho^\tau\,,
$$
which together with the second isomorphism of \eqref{equation: overL2} concludes the proof of the claim in the first case.

In the second case, by Theorem \ref{theorem: KhareLarsen}, there is an elliptic curve $E'$ defined over $L$ such that $A'_L$ is isogenous to $E'\times E'^ \tau$, and we may take 
$$
\varrho=\varrho_{E,\ell}\,,\quad \varrho^\tau=\varrho_{E^\tau,\ell}\,,\quad\varrho'=\varrho_{E',\ell}\,,\quad \varrho'^\tau=\varrho_{E'^\tau,\ell}\,.
$$
Then, the first isomorphism of \eqref{equation: overL2} expresses the fact that $E'$ is the quadratic twist of $E$ by $\chi$. Choosing Weierstrass equations $y^2=x^3+ax+b$ and $dy^2=x^3+ax+b$, with $a,b,d\in \mathcal O_L$, for~$E$ and~$E'$ respectively, we see that for almost all $\p\in \Sigma_L$ whether $\chi(\Frob_\p)$ is $1$ or $-1$ depends on whether $d$ is a square or not in $L(\p)^\times$. The same reasonning using $E^\tau$ and $E'^\tau$ instead, shows that for almost all $\p\in \Sigma_L$ whether $\psi(\Frob_\p)$ is $1$ or $-1$ depends on whether $\tau(d)$ is a square or not in $L(\p)^\times$. Since $d$ is a square in $L(\p)^\times$ if and only if so is $\tau(d)$ in $L(\tau(\p))^\times\simeq L(\p)^\times$, we deduce that $\chi$ and $\psi$ coincide, as desired.

\section{A counterexample in dimension 3}\label{section: counterexample}

Let $\varrho$ and $\varrho'$ be as in Example \ref{example: counterexample deg3}, namely, a pair of nonisomorphic faithful irreducible degree~$3$ representations of the group $G$ with \cite{GAP} identifier $\langle 48,3\rangle$. They are realizable over the quadratic ring $\Z[i]$. Let $\href{https://www.lmfdb.org/NumberField/12.4.213838914125824000000.1}{F}$ be the number field $\Q[T]/(f)$, where
$$
f(T)=T^{12} + 2T^{10} - 82T^8 + 50T^6 + 595T^4 + 500T^2 + 25\,.
$$
Let $\tilde F$ be the Galois closure of $\href{https://www.lmfdb.org/NumberField/12.4.213838914125824000000.1}{F}$. According to \cite{LMFDB}, the \cite{GAP} identifier of $\Gal(\tilde F/\Q)$ is $\langle 48,3\rangle$. A straightforward computation shows that $\tilde F$ does not contain the quadratic field $K=\Q(i)$. Let $L$ denote the compositum of $\tilde F$ and $K$, and choose an isomorphism between $G$ and $\Gal(L/K)$. Use this isomorphism to regard $\varrho$ and $\varrho'$ as Artin representations of the group $\Gal(L/K)$. 

Let $E$ be an elliptic curve with CM by $\Z[i]$ defined over $K$, and let~$A$ and~$A'$ be the abelian varieties $\varrho\otimes_{\Z[i]} E$ and $\varrho'\otimes_{\Z[i]} E$ 
defined over $K$ as described in \cite[\S2]{Mil72}. By \cite[Thm.\ 2.2\ (iii)]{MRS07}, we have that there exists a $K$-rational $\ell$-adic character $\psi$ of $G_K$ such that
\begin{equation}\label{equation: ldeccounter}
\varrho_{A,\ell}\simeq (\varrho \otimes \psi)\oplus (\overline \varrho \otimes \overline \psi)\,,\quad \text{and}\quad
\varrho_{A',\ell}\simeq (\varrho' \otimes \psi)\oplus (\overline \varrho' \otimes \overline \psi)\,. 
\end{equation}
Observe that this is compatible with \eqref{equation: ladicCM}: by our choices of $A$ and $A'$, the Artin representations~$\theta$ and~$\theta'$ in that formula can be taken as $\varrho$ and $\varrho'$. 
From \eqref{equation: ldeccounter}, we see that the fact that $\varrho$ and $\varrho'$ are locally polyquadratic twists implies that so are $A$ and $A'$.

We claim however that the threefolds $A$ and $A'$ are not polyquadratic twists.
If the contrary were true, then as in \S\ref{section: CMoverK} the existence of a density 1 subset $\Sigma$ of $\Sigma_K$ of primes of good reduction for $A$ and $A'$ such that for every prime $\p \in \Sigma$ the quotient $\psi(\Frob_\p)/\overline \psi(\Frob_\p)$ is not a root of unity, would imply, via the Chebotarev density theorem, that $\varrho\otimes \psi$ and $\varrho'\otimes \psi$ are polyquadratic twists. This contradicts the fact that $\varrho$ and $\varrho'$ are not polyquadratic twists.

\end{document}